\newtheorem{theorem}{Theorem}[section]    % Standard theorem environment
\newtheorem{lemma}[theorem]{Lemma}          % Lemma environment with numbering 
\newtheorem{proposition}[theorem]{Proposition}  
\newtheorem{claim}[theorem]{Claim}  
\newtheorem{corollary}[theorem]{Corollary} 
\theoremstyle{definition}
\newtheorem{definition}[theorem]{Definition}
\newtheorem{remark-number}[theorem]{Remark}  
\newtheorem{question}[theorem]{Question} 
\newtheorem{example}[theorem]{Example}   % Definition environment with 
\newtheorem{remark}{Remark}             % Unnumbered environment for remarks.
\numberwithin{equation}{section}
\newcommand{\Int}{\textrm{Int}}
\newcommand{\Z}{\mathbb{Z}}
\newcommand{\N}{\mathbb{N}}
\newcommand{\im}{{\tt Im}}
\newcommand{\QP}{{\it QP}}
\newcommand{\Dehn}{{\it Dehn}}
\newcommand{\SMod}{{\rm SMod}}
\newcommand{\Mod}{{\rm Mod}}
\newcommand{\Homeo}{{\rm Homeo}^+}
\newcommand{\e}{\varepsilon}
\title{Positive factorizations of symmetric mapping classes}
\author{Tetsuya Ito}
\address{Department of Mathematics, Graduate School of Science, Osaka University \\ 1-1 Machikaneyama Toyonaka, Osaka 560-0043, JAPAN}
\email{tetito@math.sci.osaka-u.ac.jp}
\urladdr{http://www.math.sci.osaka-u.ac.jp/~tetito/}
\author{Keiko Kawamuro}
\address{Department of Mathematics,   
The University of Iowa, Iowa City, IA 52242, USA}
\email{keiko-kawamuro@uiowa.edu}
\date{\today}
\begin{document}

\begin{abstract}
We study Question 7.9 in \cite{EV} raised by Etnyre and Van Horn-Morris; whether a symmetric mapping class admitting a positive factorization is a lift of a quasi-positive braid. We answer the question affirmatively for mapping classes satisfying certain cyclic conditions. 
\end{abstract}

\maketitle

\section{Introduction}\label{section:intro}

In \cite[Question 7.9]{EV} Etnyre and Van Horn-Morris ask the following question:
\begin{question}\cite[Question 7.9]{EV}
\label{question:main}
If a symmetric mapping class $\phi\in\Mod(S)$ admits a positive factorization, then is $\phi$ a lift of a quasi-positive braid ?
\end{question}

This is a profound question connecting three important objects in topology; (1) symmetric mapping classes, (2) positive factorizations, and (3) quasipositive brads. 
We describe each here. 

Let $S$ be a compact, oriented surface with non-empty boundary, which is a special (see Section~\ref{section:BHtheory}) cyclic branched covering of a disk $D$ branched at $n$-points. 
A mapping class $\phi \in \Mod(S)$ is called \emph{symmetric} (fiber-preserving) \cite[p.65]{B} if $\phi$ is a lift of an element of the braid group $B_{n}=\Mod(D^{2} \setminus \{n \mbox{ points}\})$. 
Symmetric mapping class groups were introduced and studied by Birman and Hilden in a series of papers culminating in \cite{BH}.  
As Margalit and Winarski say in \cite{MW}, the Birman-Hilden theory has had influence on many areas of mathematics, from low-dimensional topology, to geometric group theory, to representation theory, to algebraic geometry and more.

A \emph{positive factorization} of a mapping class $\phi \in \Mod(S)$ is a factorization of $\phi$ into positive (right-handed) Dehn twists about simple closed curves. 
In contact and symplectic geometry, positive factorizations of  mapping classes play an important role due to the following fact: 
A contact $3$-manifold is Stein fillable if and only if it is supported by an open book whose monodromy admits a positive factorization \cite{AO, GI, LP}. 

A {\em quasipositive} braid in $B_n$ is a braid which factorizes into positive half twists about proper simple arcs in the $n$-punctured disk. 
Quasipositive knots and links are introduced and studied by Rudolph in a series of papers. 
Rudolph showed \cite{R} that a quasipositive knot can be realized as an intersection (transverse $\mathbb C$-link) of the unit sphere in $\mathbb C^2$ with an algebraic complex curve in $\mathbb C^2$. Boileau and Orevkov \cite{BO} proved the converse that any knot arising as such an intersection must be quasipositive.

In this paper, we give partial answers to the question of Etnyre and Van Horn-Morris.

Let $S$ be a compact, oriented surface with non-empty boundary. 
Let $D$ be a disk.  
Suppose $\pi: S \to D$ is a special $k$-fold cyclic branched covering of the disk $D$ branched at $n$ points. The meaning of ``special'' will be made clear in Section~\ref{section:BHtheory}. 
In \cite{BH} Birman and Hilden show that there is a well-defined injective homomorphism $\Psi: B_n \to \Mod(S)$ whose image is the symmetric mapping class group $\SMod(S)$ which is defined in Section~\ref{section:BHtheory}.

%\subsection{Question of Etnyre and Van Horn-Morris}

Using the homomorphism $\Psi:B_n \to \SMod(S)$, Etnyre and Van Horn-Morris in \cite{EV} consider various submonoids in $B_n$. 
\begin{eqnarray*}
P(n) 
&:=& 
\left\{ b \in B_n \ | \ b \mbox{ is a positive braid } \right\} \\ 
QP(n) 
&:=& 
\left\{ b \in B_n \ | \ b \mbox{ is a quasi-positive braid } \right\} \\
Dehn^+(n, k) 
&:=& 
\Psi^{-1}(Dehn^+(S)) \\
Tight^+(n, k) 
&:=& 
\Psi^{-1}(Tight^+(S)) \\
RV(n) 
&:=& 
\left\{ b \in B_n \ | \ b \mbox{ is a right-veering braid }
\right\} \\
Veer^+(n, k) 
&:=& 
\Psi^{-1}(Veer^+(S)) 
\end{eqnarray*}
Here, a braid $b \in B_{n}$ is \emph{positive} if it is a product of standard generators $\sigma_{1},\ldots,\sigma_{n-1}$, and is \emph{quasi-positive} if it is a product of conjugates of $\sigma_1$. 
We have $P(2)=\QP(2)$ and $P(n) \subsetneq QP(n)$ for $n>2$. 
The set $Dehn^{+}(S) \subset \Mod(S)$ is a monoid generated by positive Dehn twists, $Tight^{+}(S)\subset \Mod(S)$ is a monoid consisting of  monodromies supporting tight contact structures, and $Veer^{+}(S)\subset \Mod(S)$ is a monoid consisting of right-veering mapping classes.
One can see that $\Psi(b)$ is right-veering if and only if $b$ is right-veering (see \cite[Section 7]{IK2} for the definition(s) of right-veering braids).

\begin{proposition}
We have $Veer^+(n, k) = RV(n)$ for all $n$ and $k$. 
\end{proposition}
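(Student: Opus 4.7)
The plan is to reduce the proposition to the equivalence stated in the paragraph immediately preceding it: $\Psi(b) \in \Mod(S)$ is right-veering if and only if $b \in B_n$ is right-veering as a braid. Once that equivalence is in hand, the definition $Veer^+(n,k) = \Psi^{-1}(Veer^+(S))$ gives
\[
 b \in Veer^+(n,k) \iff \Psi(b) \in Veer^+(S) \iff b \in RV(n),
\]
and the proposition follows immediately, independently of $k$.

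To establish the underlying equivalence, I would work directly with test arcs. Recall from \cite[Section 7]{IK2} that $b \in B_n$ is right-veering iff for every properly embedded arc $\alpha$ in the $n$-punctured disk with one endpoint on $\partial D$, the image $b \cdot \alpha$ is either isotopic to $\alpha$ or lies weakly to the right of $\alpha$ at their shared boundary endpoint. On the surface side, $\phi \in \Mod(S)$ is right-veering iff the analogous property holds for every properly embedded arc $\beta$ in $S$ based on $\partial S$.

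The bridge between the two test conditions is the branched cover $\pi: S \to D$. Since $\Psi(b)$ is the symmetric lift of $b$, it is equivariant with respect to the cyclic deck group, so the image under $\Psi(b)$ of any lift of $\alpha$ is a lift of $b\cdot \alpha$. Because $\pi$ is a local orientation-preserving homeomorphism near $\partial S$, the ``right of'' comparison between two arcs at a boundary point of $S$ coincides with the corresponding comparison of their projections at the projected boundary point of $\partial D$. One direction is then clear: if $\Psi(b)$ is right-veering on $S$, projecting any test arc $\alpha$ in $D$ to one of its lifts on $S$ and comparing at a boundary point shows $b$ is right-veering. For the converse, one lifts the test arc and uses equivariance together with the local orientation-preserving property of $\pi$ to compare $\Psi(b)\cdot \beta$ with $\beta$ at $\partial S$.

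The main obstacle I anticipate is the converse direction: ensuring that testing on arcs obtained by lifting arcs from $D$ is sufficient to detect failure of right-veeringness of $\Psi(b)$. Equivalently, one must argue that no ``exotic'' arc $\beta$ on $S$ whose projection does not behave like a simple arc in $D$ can witness non-right-veering behavior missed by the braid test. I expect this to reduce to an isotopy/general-position argument: any properly embedded arc in $S$ based at $\partial S$ is isotopic, rel its endpoint on $\partial S$, to a lift of some properly embedded arc in the $n$-punctured disk (possibly after pushing it away from branch points), so its right-veering behavior under $\Psi(b)$ is determined by the behavior of its projection under $b$. This is the only step that requires genuine use of the ``special'' hypothesis on $\pi$ from Section~\ref{section:BHtheory}, since that hypothesis is what guarantees the clean lifting correspondence between arcs in $D$ and arcs in $S$.
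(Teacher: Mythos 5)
Your reduction of the proposition to the asserted equivalence ``$\Psi(b)$ is right-veering if and only if $b$ is right-veering'' is exactly how the paper proceeds (the paper states this equivalence just before the proposition and gives no further argument), and your proof of the inclusion $Veer^+(n,k)\subseteq RV(n)$, by lifting a test arc of the punctured disk and pushing the boundary comparison back down through $\pi$, is essentially fine, modulo some care if the definition of right-veering braid from \cite{IK2} also tests arcs ending at punctures, since those lift to arcs ending at interior branch points of $S$ rather than on $\partial S$.

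The genuine gap is in the converse inclusion $RV(n)\subseteq Veer^+(n,k)$, precisely the step you flag as the main obstacle. The claim that every properly embedded arc of $S$ based at $\partial S$ is isotopic to a lift of an embedded arc of $D_{n}$ is false, and no general-position argument repairs it: an isotopy of an arc $\beta$ in $S$ cannot remove essential self-intersections of the projection $\pi(\beta)$, and the isotopy class of $\beta$ need not contain any arc with embedded projection. Already for $n=k=2$, where $S$ is an annulus whose two boundary circles are interchanged by the deck transformation, every component of the preimage of an embedded arc in $D_2$ is either boundary-parallel (when the arc cuts off both punctures) or joins the two boundary circles (when it separates the punctures); hence an essential arc of $S$ with both endpoints on one boundary circle is isotopic to no lifted arc whatsoever. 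In general the lifted (symmetric) arcs form a proper subset of the arc classes in $S$, so right-veering of $\Psi(b)$ cannot be certified by testing only lifted arcs. This harder inclusion needs different input --- for instance lifting the Nielsen--Thurston representative of $b$ and comparing fractional Dehn twist coefficients at $\partial D$ and at the components of $\partial S$, or the material on right-veering braids in \cite{IK2} on which the paper implicitly relies --- and as written your argument does not establish it.
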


Etnyre and Van Horn-Morris observe that \cite[Lemma 3.1]{HKP} implies the following:  
\begin{proposition}\label{subset} \cite[p.355]{EV}
For all $n \geq 2$ and $k \geq 2$ we have $\QP(n) \subset \Dehn^+(n, k)$. 
\end{proposition}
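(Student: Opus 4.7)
The plan is to reduce the statement to a single fact about half-twists, namely that under the branched-covering homomorphism $\Psi : B_n \to \SMod(S)$ the standard generator $\sigma_1$ lifts to a product of positive Dehn twists (this is exactly the content of \cite[Lemma 3.1]{HKP} that is being invoked). Once this is in hand, quasi-positivity is tailor-made for the argument because it is a statement about products of conjugates of $\sigma_1$, and conjugation in $\Mod(S)$ carries a positive Dehn twist about a curve $c$ to the positive Dehn twist about its image curve.

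First I would unpack the definition of $b \in \QP(n)$ and write
\[
b \;=\; \prod_{i=1}^{\ell} w_i\, \sigma_1\, w_i^{-1}
\]
for some $w_1,\dots,w_\ell \in B_n$. Applying the homomorphism $\Psi$ and using that $\Psi$ is a group homomorphism on $B_n$ yields
\[
\Psi(b) \;=\; \prod_{i=1}^{\ell} \Psi(w_i)\, \Psi(\sigma_1)\, \Psi(w_i)^{-1}.
\]
Next I would quote \cite[Lemma 3.1]{HKP} to write $\Psi(\sigma_1) = T_{c_1} T_{c_2} \cdots T_{c_m}$, where the $c_j$ are the components of the preimage of the arc underlying $\sigma_1$ and each $T_{c_j}$ is a positive Dehn twist on $S$; the number $m$ depends on $k$ and on how the cyclic cover behaves over that arc, but every factor is positive, which is the only thing that matters.

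Finally I would use the standard identity $\varphi\, T_c\, \varphi^{-1} = T_{\varphi(c)}$ for any $\varphi \in \Mod(S)$, applied with $\varphi = \Psi(w_i)$. This shows each conjugated factor
\[
\Psi(w_i)\, T_{c_j}\, \Psi(w_i)^{-1} \;=\; T_{\Psi(w_i)(c_j)}
\]
is itself a positive Dehn twist on $S$. Concatenating, $\Psi(b)$ is exhibited as a product of positive Dehn twists, so $\Psi(b) \in \Dehn^+(S)$ and hence $b \in \Dehn^+(n,k)$.

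The only non-formal ingredient is the lift of the half-twist, which is precisely the HKP lemma we are allowed to use; the remaining steps are the definitional manipulation of $\QP(n)$, functoriality of $\Psi$, and the conjugation formula for Dehn twists, each of which is immediate. So there is no real obstacle beyond cleanly citing \cite[Lemma 3.1]{HKP} in the form needed for all $k \geq 2$.
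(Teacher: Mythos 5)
Your proof is correct and is essentially the paper's own argument: the paper also deduces the inclusion from \cite[Lemma 3.1]{HKP}, which expresses $\Psi(\sigma_i)$ as the product of the $k-1$ positive Dehn twists $[t_{i,1}\circ\cdots\circ t_{i,k-1}]$, and then uses the homomorphism property of $\Psi$ together with the conjugation formula for Dehn twists, exactly as you do. Nothing is missing.
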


We prove Proposition~\ref{subset} in Section~\ref{section:BHtheory}.

In summary, we have;
$$
P(n) \subset QP(n) \subset Dehn^+(n, k) \subset Tight(n,k) \subset Veer^+(n, k) = RV(n) \subsetneq B_n. 
$$

In \cite[Example 2.9]{HKM}, the strictness of the inclusion $QP(3)\subset Veer^+(3, 2)$ is shown. 

\begin{proposition}
In general, both the inclusions $Dehn^+(n, k) \subset Tight(n,k) \subset Veer^+(n, k)$ are strict as we show below: 
\end{proposition}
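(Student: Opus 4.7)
The plan is to exhibit explicit witnesses for each non-inclusion, drawing on known constructions from contact topology.

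For the strict inclusion $Dehn^+(n,k) \subsetneq Tight^+(n,k)$, we need a braid $b \in B_n$ whose symmetric lift $\Psi(b) \in \SMod(S)$ is the monodromy of a tight but non-Stein-fillable open book: by \cite{AO,GI,LP}, non-Stein-fillability is equivalent to the absence of any positive factorization of $\Psi(b)$. Candidates arise from tight, non-fillable open books whose monodromies can be arranged with a cyclic symmetry, for instance among the tight, non-fillable contact structures on small Seifert fibered manifolds appearing in the work of Lisca--Stipsicz, Ghiggini, and others. After realizing such a monodromy as a product of Dehn twists about curves permuted by the covering involution, one applies $\Psi^{-1}$ to obtain the desired braid. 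Tightness is certified by nonvanishing of the Ozsv\'ath--Szab\'o contact invariant in Heegaard Floer homology, giving $b \in Tight^+(n,k) \setminus Dehn^+(n,k)$.

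For the strict inclusion $Tight^+(n,k) \subsetneq Veer^+(n,k)$, we need a right-veering braid $b \in RV(n)$ whose symmetric lift $\Psi(b)$ supports an overtwisted contact structure. A convenient source is the Honda--Kazez--Mati\'c analysis \cite{HKM} of right-veering monodromies on the once-punctured torus $S$, which produces many right-veering mapping classes whose supported open books are non-tight; those among them lying in the image of $\Psi \colon B_3 \to \SMod(S)$ (the case $n=3$, $k=2$) serve as our witnesses. Right-veeringness of $b$ transfers via the already-established equality $Veer^+(n,k) = RV(n)$, and overtwistedness is certified either by exhibiting an overtwisted disk in the open book or by showing the vanishing of the Ozsv\'ath--Szab\'o contact invariant, yielding $b \in Veer^+(n,k) \setminus Tight^+(n,k)$.

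The main obstacle is the first of the two inclusions: one must simultaneously arrange the required cyclic symmetry of the monodromy (so that it lies in the image of $\Psi$), certify tightness, and rule out every conceivable positive factorization of $\Psi(b)$. By contrast, the overtwisted right-veering example for the second inclusion is essentially on the shelf, requiring only a short braid word together with direct verifications of its two defining properties.
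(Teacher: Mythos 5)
Your proposal is a plan rather than a proof, and the gap you yourself flag is precisely the one that is never closed. For the first strictness you never produce a witness: you gesture at tight non-fillable contact structures on small Seifert fibered spaces, but you do not exhibit a monodromy that (a) lies in the image of $\Psi$ for the specific branched covering $\pi\colon S\to D$ of this setting, (b) is certified tight, and (c) provably admits no positive factorization. Ruling out \emph{every} positive factorization of a symmetric mapping class is exactly the hard part, and ``one applies $\Psi^{-1}$'' presupposes the symmetry you have not arranged. The paper's proof sidesteps all of this with a concrete element of $B_3$, namely $\beta=(\sigma_1\sigma_2)^6\sigma_1^{-13}$: tightness of the lift follows from the Honda--Kazez--Mati\'c criterion (the fractional Dehn twist coefficient of $\Psi(\beta)$ equals one on the genus-one surface $S_{3,2}$), and non-membership in $Dehn^+(3,2)$ follows from the negative exponent sum together with the identification $QP(3)=Dehn^+(3,2)$ proved in Theorem~\ref{theorem:main2} of the paper. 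Any correct argument for this inclusion must contain some such mechanism for excluding positive factorizations; your proposal names the obstacle but supplies no mechanism.

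The second strictness is also left incomplete, and it contains a logical error: vanishing of the Ozsv\'ath--Szab\'o contact invariant does \emph{not} certify overtwistedness (tight structures with vanishing invariant exist, e.g.\ with positive Giroux torsion), so that ``certification'' route is invalid, and no overtwisted disk or explicit braid word is actually produced. Your idea of using the Honda--Kazez--Mati\'c classification of right-veering, non-tight monodromies of the once-punctured torus (where every mapping class is symmetric, so membership in $\Psi(B_3)$ is automatic) could be made to work, and would be a genuinely different route from the paper; but as written it is only a pointer. The paper instead takes any right-veering positive stabilization $\beta\in B_n$ of $\sigma_1^{-1}\in B_2$: right-veering gives $\beta\in Veer^+(n,k)=RV(n)$, while positive stabilization preserves the transverse link type of the branch locus, so the lifted contact manifold is the overtwisted one coming from $\sigma_1^{-1}$, whence $\beta\notin Tight(n,k)$ for every $k\ge 2$. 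To repair your write-up you would need, at minimum, an explicit monodromy with an exhibited overtwisted disk (or some other valid overtwistedness certificate) for the second inclusion, and an explicit symmetric example together with an obstruction to positive factorizations for the first.
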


\begin{proof}
Let $\beta = (\sigma_{1}\sigma_{2})^{6} \sigma_{1}^{-13} \in B_{3}$. 
By \cite[Theorem 1.2]{HKM} 
the braid $\beta$ is in $Tight(3, 2)$ since the fractional Dehn twist coefficient of $\Psi(\beta) \in \Mod(S)$ is one. 
However, $\beta$ is not in $Dehn^{+}(3,2)$ since its exponent sum is negative which means $b \not\in QP(3)=Dehn^+(3, 2)$, where the equality $QP(3)=Dehn^+(3, 2)$ will be proved in Theorem~\ref{theorem:main2}. (See \cite[Corollary 3.6]{HKM} for a better criterion for $b \not \in QP(n)$ than a naive exponent sum argument.)

To see $Tight(n,k) \subsetneq Veer^+(n, k)$, recall \cite[Proposition 3.1]{PL} (for the open book $(D^2, id)$) and \cite[Proposition 3.14]{IK2} (for general open books) that every braid with respect to an open book is transversely isotopic to a right-veering braid after suitable positive stabilizations.
Let $\beta \in B_{n}$ be a right-veering braid that is a stabilization of $\sigma_1^{-1} \in B_2$. Then for each $k\geq 2$, $\beta$ is in $Veer^{+}(n,k)$ but not in $Tight(n,k)$.
\end{proof}

With these terminologies, Question 1.1 of Etnyre and Van Horn-Morris is equivalent to the following:

\begin{question}\cite[Question 7.9]{EV}
Do we have $\QP(n) = \Dehn^+(n, k)$? 
\end{question}

In \cite{EV} they say ``the answer is almost certainly no''. 
Thus our goal can be set to find sufficient conditions for $\QP(n) \supset \Dehn^+(n, k)$.

\subsection{Motivation}
Our particular branched covering $\pi:S\to D$ is closely related to the cyclic branched covering of the standard contact 3-sphere $(S^{3},\xi_{std}=\ker\alpha_{std})$. 
Let $K=\widehat{b}$ be a transverse knot in $(S^{3},\xi_{std})$ represented by the closure $\widehat b$ of an $n$-braid $b\in B_n$ with respect to the open book $(D^{2},id)$. Let $p: M_{K, k} \rightarrow S^{3}$ be the $k$-fold cyclic branched covering, branched along $K$. Then $M_{K,k}$ is equipped with a contact structure $\xi_{K,k}$ that is 
a perturbation of the kernel of the pull-back $p^{*}(\alpha_{std})$. Such a contact structure is supported by the open book $(S,\Psi(b))$. Thus, $(M_{K,k}, \xi_{K,k}) \simeq (M_{(S,\Psi(b))}, \xi_{(S,\Psi(b))})$.

Let $B^{4} (\subset {\mathbb C}^4)$ be the unit complex ball giving a Stein filling of $(S^{3},\xi_{std})$.
If the braid $b \in B_n$ is quasi-positive, a factorization of $b$ as a product of positive half twists gives rise to an immersed Seifert surface of $K=\widehat b$ with ribbon intersections as shown in \cite[Figure 9]{EV}. 
Pushing this surface into the interior of $B^{4}$ we have a properly  embedded symplectic surface $\Sigma$ in $B^{4}$ such that $\Sigma \cap \partial B^{4} = \partial \Sigma = K$. 
Let $W$ be the $k$-fold cyclic branched cover of $B^{4}$ branched along $\Sigma$. Then $W$ gives a Stein filling of $(M_{K,k},\xi_{K,k})$ % that comes from the Lefschetz fibration with total monodromy $\Psi(b) \in\Mod(S)$.

On the other hand, a factorization of $b\in B_n$ into positive half twists induces a factorization of $\Psi(b) \in \Mod(S)$ into positive Dehn twists  (Proposition~\ref{subset}), to which one can associate a Legendrian surgery diagram (see Section~\ref{section:BHtheory} and \cite[Fig. 12]{HKP}). 
Let $X$ be the 4-dimensional handlebody obtained by attaching 2-handles to $B^4$ according to the Legendrian surgery diagram. 
By \cite{Eli}, \cite[Proposition 2.3]{G} the manifold $X$ is 
a Stein filling of $(M_{(S,\Psi(b))}, \xi_{(S,\Psi(b))})$. 

In fact, these two constructions of Stein fillings give the same manifold. Namely, the two 4-manifolds $X$ and $W$ are diffeomorphic, which follows from the proof of  \cite[Claim 2.1]{IT}. 
Thus, the branched covering construction behaves nicely not only for contact structures but also for Stein fillings.

With the above discussion in mind, we may extend Question \ref{question:main} to the following question about Stein filling and $\Z \slash k\Z$-action:

Assume that $b\in \Dehn^+(n, k)$. 
Then $(M_{K,k},\xi_{K,k})\simeq (M_{(S,\Psi(b))}, \xi_{(S,\Psi(b))})$ is Stein fillable because 
%$(M_{(S,\Psi(b))}, \xi_{(S,\Psi(b))})$ is obtained by applying Legendrian surgeries to $(M_{(S, id)}, \xi_{(S, id)})$, which is Stein fillable, and a Legendrian surgery in general  preserves Stein fillability \cite{Eli} (see also \cite[Theorem 1.3]{G}). } 
$\Psi(b) \in \Dehn^+(S)$ and $\Dehn^+(S)$ is contained in the monoid  $Stein(S)$ of Stein fillable open books \cite{Eli, GI}. 
The contact manifold $(M_{K,k},\xi_{K,k})$ also admits an $\Z\slash k\Z$-action as a contactomorphism with the quotient space $(S^{3},\xi_{std})$. Our question is: 

{\em Can we find a Stein filling $X$ coming from the above construction; namely, can the $\Z \slash k\Z$-action on $(M_{K,k},\xi_{K,k})$ extend to a holomorphic $\Z \slash k\Z$-action on some $X$ with the quotient space $B^4$? }

This new question suggests that, even if Question \ref{question:main} may have negative answer in general as Etnyre and Van Horn-Morris expect in \cite{EV}, it is worth trying to find sufficient conditions for $\QP(n) \supset \Dehn^+(n, k)$ to be hold.

\subsection{Main results} 
The following are our main results that give sufficient conditions for $\QP(n) \supset \Dehn^+(n, k)$. 

\noindent
{\bf Theorem~\ref{theorem:main2}}. {\em
For $n\leq 4$, $\QP(n) = \Dehn^+(n,2)$. \\
}

\noindent
{\bf Theorem~\ref{theorem:twist}}. {\em 
Let $\iota:S\to S$ be a deck transformation. 
Let $C$ be a simple closed geodesic curve in $S$ such that $C, \iota(C), \dots, \iota^{e-1}(C)$ are pairwise disjoint and $\iota^e(C)=C$ for some $e\in\{1,\dots,k\}$ that divides $k$. 
Let $d, j \in \N$. 
Suppose that $b^d \in \Dehn^+(n, k)$
with  
$$
\Psi(b^d)= \left(T_C \circ T_{\iota(C)} \circ T_{\iota^2(C)} \circ\dots\circ T_{\iota^{e-1}(C)}\right)^j.$$ 
Then $b \in QP(n)$ $($and so $b^d \in QP(n)).$ \\
}

This theorem states that Question \ref{question:main} has an affirmative answer for a symmetric mapping class which is a root of symmetric product of positive Dehn twists (see also Corollary \ref{cor:froot}). \\

\noindent
{\bf Theorem \ref{thm:general}.} {\em
Suppose that the subsurfaces $S'_1,\ldots, S'_k \subset S$ are pairwise non-isotopic. 
Assume that $[f]\in Dehn^{+}(S')$ is either 
\begin{itemize}
\item a non-negative power of a single Dehn twist (i.e., $S'$ is an annulus which is neighborhood of a simple closed geodesic curve), or 
\item a pseudo-Anosov map (i.e., $S'$ is a hyperbolic surface).
\end{itemize} 
Suppose that $b \in Dehn^+(n,k)$ satisfies 
\[ \Psi(b)=[f_1 f_2 \cdots f_{k}] \]
then $b \in QP(n)$.
}

%Although the last theorem looks somewhat technical at first glance, it has the following meaning. 
We say that a simple closed curve $C$ is \emph{symmetric} if it is invariant under some deck translation. Question \ref{question:main} can be understood as a question 
of the existence of factorizations of elements of $Dehn^+(S) \cap \SMod(S)$ into positive Dehn twists about symmetric simple closed curves. 

%on existence of factorization as a product of Dehn twists about symmetric simple closed curves for symmetric mapping classes, in the presence of positive factorization. 

A well-known example where a positive factorization coincides with a product of Dehn twists about symmetric simple closed curves may be the daisy relation \cite{EMV}.  
In Example \ref{example-daisy} we see that the technical looking assumptions in Theorem \ref{thm:general} can be understood as a generalization of the setting of the daisy relation, and view Theorem \ref{thm:general} as a generalization of the daisy relation.

\subsection{Organization of the paper} 

In Section \ref{section:BHtheory} we review results of Birman and Hilden that we need in this paper. 

In Section \ref{section:panswer}, we show that the answer to the question is affirmative when the number of branch points $n$ is two (Theorem \ref{theorem:main1}) or the degree of the branched covering is two with $n\leq 4$ branch points (Theorem \ref{theorem:main2}). 

In Section \ref{section:QP}, we discuss roots of quasi-positive braids and find conditions that a root of a quasi-positive braid is also quasi-positive. 
We obtain results that are used in Section~\ref{section:suff}.

In Section \ref{section:suff}, 
we prove our main results (Theorems~\ref{theorem:twist} and  \ref{thm:general}).

\section{Birman-Hilden theory} 
\label{section:BHtheory}

Throughout the paper, unless otherwise stated, we always assume that the boundary of a surface is non-empty, any homeomorphism of a surface with marked points (punctures) fixes the boundary pointwise and permutes the marked points, and any isotopy of homeomorphisms $\{f_{t}\}$ pointwise fixes  the boundary and the marked points.
We denote by $[f] \in\Mod(S)$ the isotopy class of a homeomorphism $f \in \Homeo(S)$. 
For a simple closed curve $C$ in a surface $S$, we denote by $t_{C} \in \Homeo(S)$ a right-handed Dehn twist about $C$ which is  supported in a neighborhood of $C$, and denote its isotopy class by $T_{C}\in\Mod(S)$.
A simple closed curve in a surface is called \emph{essential} if it is not homotopic to a point, a puncture, or a boundary component of the surface.

Let $P =\{p_1,\ldots,p_n \}\subset \Int (D)$ be $n$ points in the interior of a 2-disk $D$. We may identify the braid group $B_{n}$ with the mapping class group $\Mod(D\setminus P)$ of the $n$-punctured disk $D \setminus P$. 
The fundamental group $\pi_{1}(D\setminus P)$  is the free group of rank $n$ and generated by $\{x_1,\ldots,x_n\}$, where $x_{i}$ is a loop winding once around $p_i$ counter-clockwise. 
For $k\in \N$ let $e_k: \pi_{1}(D\setminus P) \rightarrow \Z\slash k\Z$
be a homomorphism defined by $e_{k}(x_i) = 1$ for all $i=1,\ldots,n$.
For $n\geq 2$ and $k\geq 2$, let $$\pi: S \approx S_{n,k} \to D$$ be the $k$-fold cyclic branched covering branched at $P$ such that the associated regular covering $S \setminus \pi^{-1}(P) \rightarrow D \setminus P$ is the $k$-fold cyclic cover corresponding to the subgroup $\ker(e_{k})$ of $\pi_1(D \setminus P)$. We denote by $\widetilde P =\{\widetilde{p_1},\ldots,\widetilde{p_{n}}\}= \pi^{-1}(P) \subset S$ the set of branch points in $S$. 
Let $\iota= \iota_{k} : S \to S$ be a generator of the deck transformation group ${\rm Aut}(S, \pi) \simeq \Z/k\Z$. 

We visualize $S$ and $\iota$ as follows. The covering $S$ can be viewed as the canonical Seifert surface of the $(n,k)$-torus link represented as the closure of the $n$-braid $(\sigma_{1}\cdots \sigma_{n-1})^{k}$ 
(see Figure~\ref{torus-link}). The deck transformation $\iota_k$ is the $\frac{2\pi}{k}$ rotation of the surface $S$ about the braid axis through the branch points $\widetilde P$, 
and  $\partial S$ is not pointwise fixed by $\iota_k$.  
%%%

\begin{figure}[htbp]
\includegraphics*[bb=71 564 384 745,width=110mm]{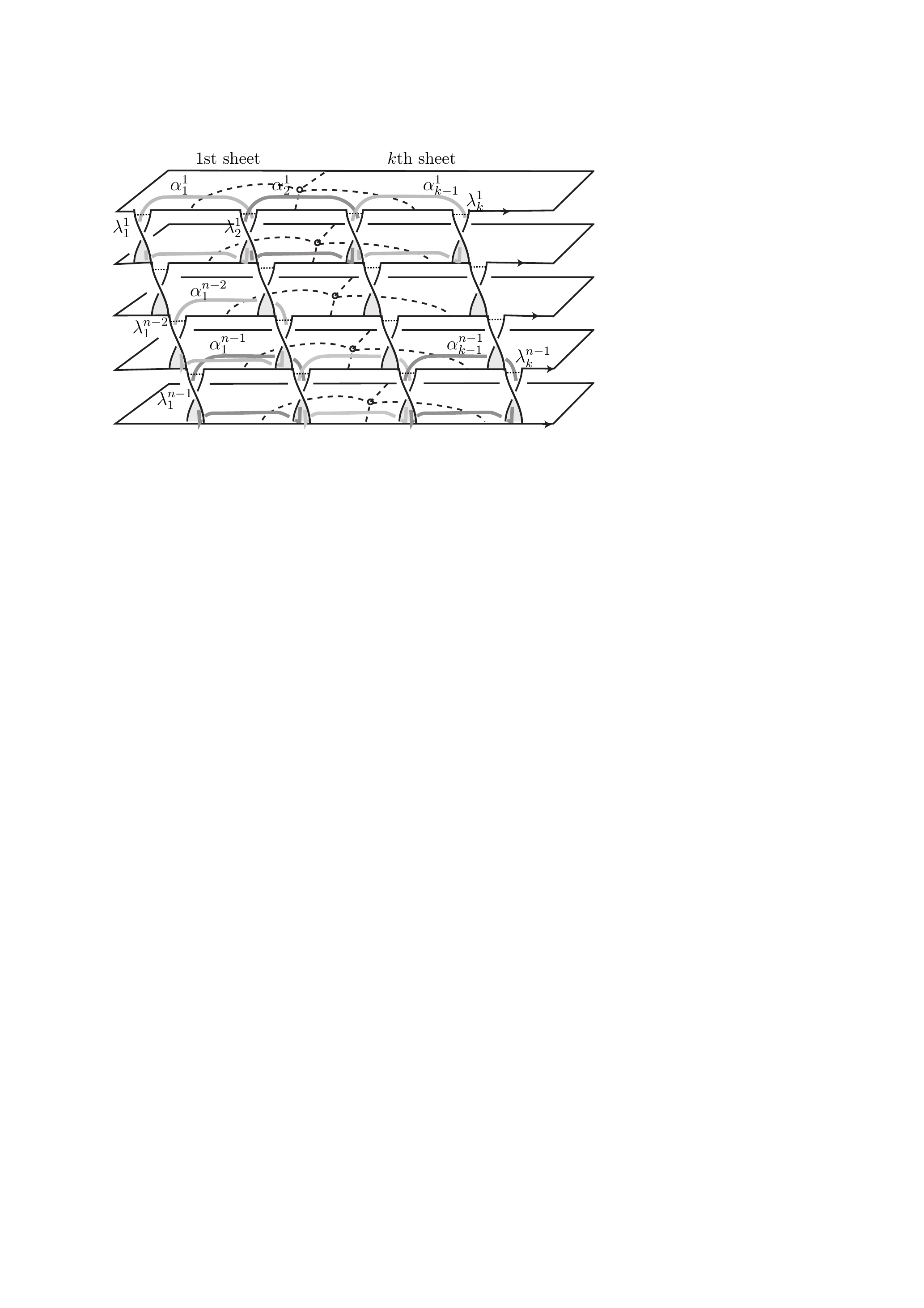}
\caption{The Seifert surface $S$ of a $(n,k)$ torus link, where $n=5$ and $k=4$. Hollowed circles are the lifted branch points $\widetilde P$. Cutting $S$ along the dashed arcs gives $k$ sheets of disks. The deck transformation $\iota_k$ takes $i$-th disk to $i+1$st disk. %Circles $\alpha_l^i$ and arcs $\lambda_l^j$ are used in the proof of Proposition~\ref{prop-Birman-Hilden}.  
} 
\label{torus-link}
\end{figure}

Let $\beta \in B_{n} \approx \Mod(D\setminus P)$ and $f_{\beta}:(D,P) \rightarrow (D,P)$ be a homeomorphism representing the braid $\beta$. 
Since $\beta(\ker (e_k))= \ker(e_k)$ via the action of $B_{n}$ on $\pi_{1}(D \setminus P)$,
there is a lift $\widetilde{f_{\beta}}: S \rightarrow S$ of $f_{\beta}$ (see \cite[Lemma 5.1]{BH} and \cite[Theorem 1.1]{GW}).
Among the $k$ possible lifts which are related to each other by deck transformations, $\widetilde{f_{\beta}}$ is the unique lift that fixes $\partial S$ pointwise.

When two homeomorphisms $f_{\beta}$ and $f'_{\beta} \in \Homeo(D, P)$ represent the same braid $\beta\in B_n$ we note that an isotopy between $f_{\beta}$ and $f'_{\beta}$  lifts to an isotopy between their lifts $\widetilde{f_{\beta}}$ and $\widetilde{f'_{\beta}}$. Thus, we have a well-defined homomorphism
\[ \Psi :B_{n} \rightarrow \Mod(S) \]
defined by $\Psi(\beta) = [\widetilde{f_{\beta}}]$.
Let $$\Theta=\Theta_{k}: \Mod(S) \rightarrow \Mod(S)$$ be an automorphism defined by $ \Theta([f]) =[\iota_k \circ f \circ \iota_k^{-1}]$.

\begin{definition}
Define
\begin{equation*}
\SMod(S)  :=  \left\{ [f] \in \Mod(S) \ \middle| \ f \in \Homeo(S) \mbox{ and } \Theta([f])=[f] \right\} 
\end{equation*}
and call it the \emph{symmetric mapping class group}.
An element of $\SMod(S)$ is called {\em symmetric mapping class}. 
\end{definition}

The following fact is attributed to Birman and Hilden. 

\begin{proposition}\label{prop-Birman-Hilden} 
The homomorphism $\Psi$ is injective and onto $\SMod(S)$. 
\end{proposition}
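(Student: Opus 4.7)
The plan is to establish the two assertions — injectivity and surjectivity onto $\SMod(S)$ — by invoking the key Birman--Hilden equivariant isotopy principle: for the special branched covers under consideration, two fiber-preserving (i.e., strictly $\iota_k$-equivariant) homeomorphisms of $S$ that are isotopic rel $\partial S$ are in fact isotopic through fiber-preserving homeomorphisms, and every symmetric mapping class has a strictly equivariant representative. I would cite \cite{BH} (and the exposition \cite{MW}) for this technical input and build the proposition on top of it.

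First I would check that $\Psi(B_n) \subseteq \SMod(S)$. Given $\beta \in B_n$ with representative $f_\beta$ and lift $\widetilde{f_\beta}$ that fixes $\partial S$ pointwise, the conjugate $\iota_k \circ \widetilde{f_\beta} \circ \iota_k^{-1}$ is also a lift of $f_\beta$. For any $x \in \partial S$ one has $\iota_k^{-1}(x) \in \partial S$, hence $\widetilde{f_\beta}(\iota_k^{-1}(x)) = \iota_k^{-1}(x)$, and so $\iota_k \circ \widetilde{f_\beta} \circ \iota_k^{-1}(x) = x$. Thus this conjugate is the unique lift fixing $\partial S$ pointwise, forcing $\iota_k \circ \widetilde{f_\beta} \circ \iota_k^{-1} = \widetilde{f_\beta}$ on the nose. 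In particular $\Theta(\Psi(\beta)) = \Psi(\beta)$, so $\Psi(\beta) \in \SMod(S)$.

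Next, for injectivity, suppose $\Psi(\beta) = [\mathrm{id}_S]$, i.e.\ $\widetilde{f_\beta}$ is isotopic to $\mathrm{id}_S$ rel $\partial S$. Both maps are strictly $\iota_k$-equivariant by the previous paragraph. The Birman--Hilden equivariant isotopy theorem then gives an equivariant isotopy $\{h_t\}$ from $\widetilde{f_\beta}$ to $\mathrm{id}_S$ rel $\partial S$. Quotienting by the deck group yields an isotopy $\{\pi \circ h_t \circ \pi^{-1}\}$ of $D$ from $f_\beta$ to $\mathrm{id}_D$ rel $\partial D$ and $P$, so $\beta = 1 \in B_n$. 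For surjectivity, take $[g] \in \SMod(S)$. The condition $\Theta([g]) = [g]$ says $\iota_k g \iota_k^{-1}$ and $g$ are isotopic rel $\partial S$. Applying the Birman--Hilden principle in its other guise — that any symmetric mapping class has a strictly equivariant representative — I replace $g$ within its isotopy class by $g'$ satisfying $\iota_k g' = g' \iota_k$ exactly. Such a $g'$ sends each $\iota_k$-orbit to an $\iota_k$-orbit and fixes $\partial S$ pointwise, so it descends to a homeomorphism $\overline{g'} : D \to D$ fixing $\partial D$ pointwise and permuting the branch set $P$. Let $\beta \in B_n$ be the braid represented by $\overline{g'}$. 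By construction $g'$ is the boundary-fixing lift of $\overline{g'}$, hence $\Psi(\beta) = [g'] = [g]$.

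The main obstacle is the equivariant isotopy theorem itself: both the statement that isotopic equivariant homeomorphisms are equivariantly isotopic, and the statement that every symmetric isotopy class contains a strictly equivariant representative. These are genuinely delicate — they can fail for the hyperelliptic involution on a closed genus-two surface, which is precisely why the word ``special'' appears in the hypothesis on $\pi : S \to D$ and is pinned down later in this section. Once those facts are taken from \cite{BH}, the argument above is essentially formal: symmetry of the lift comes from uniqueness of the boundary-fixing lift, injectivity comes from pushing equivariant isotopies down to $D$, and surjectivity comes from pushing an equivariant representative down to $D$.
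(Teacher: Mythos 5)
Your proposal is correct, and for two of its three parts it follows the same route as the paper: injectivity is obtained by taking the fiber-preserving (equivariant) isotopy supplied by Birman--Hilden \cite{BH} and pushing it down to the disk, and surjectivity is obtained by replacing a symmetric mapping class by a strictly $\iota_k$-equivariant representative (the paper gets this from the argument of \cite[Theorem 3]{BH} together with Teichm\"uller theory for bordered surfaces \cite{A}) and then descending it to a homeomorphism of $(D,P)$, exactly as you do. Where you genuinely diverge is the containment $\im(\Psi)\subseteq \SMod(S)$: you observe that $\iota_k\circ\widetilde{f_\beta}\circ\iota_k^{-1}$ is again a lift of $f_\beta$ fixing $\partial S$ pointwise (since $\iota_k$ preserves $\partial S$ setwise and nontrivial deck transformations move $\partial S$), so uniqueness of the boundary-fixing lift forces $\iota_k\circ\widetilde{f_\beta}\circ\iota_k^{-1}=\widetilde{f_\beta}$ on the nose, hence $\Theta(\Psi(\beta))=\Psi(\beta)$. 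The paper instead verifies symmetry only for the generators, using the explicit factorization $\Psi(\sigma_i)=[t_{i,1}\circ\cdots\circ t_{i,k-1}]$ from \cite[Lemma 3.1]{HKP}, the system of arcs $\lambda^j_l$ cutting $S$ into disks, and the Alexander trick. Your argument is shorter, avoids the Dehn-twist computation entirely, and yields strict (not just up-to-isotopy) equivariance of the lift, which you then reuse for injectivity; the paper's computation has the side benefit of exhibiting the positive factorization of $\Psi(\sigma_i)$ that is needed elsewhere (e.g., for Proposition~\ref{subset}). The only points you leave implicit --- that the quotient of a fiber-preserving isotopy fixes the marked points and the boundary as required, and that the equivariant representative still fixes $\partial S$ pointwise --- are handled at the same level of detail in the paper, so I see no gap.
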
 

%\section{Appendix}\label{sec:appendix}
For the sake of completeness, we include a proof of Proposition~\ref{prop-Birman-Hilden}. 

\begin{proof}[Proof of Proposition~\ref{prop-Birman-Hilden}] 
The injectivity of $\Psi$ follows from \cite{BH}.
A homeomorphism $f: S \rightarrow S$ is called \emph{fiber-preserving} if $\pi\circ f(p) = \pi \circ f(p')$ for any $p,p' \in S$ with $\pi(p)=\pi(p')$.
Suppose that $f_0, f_1 \in \Homeo(D, P)$ satisfy $\Psi([f_0])=\Psi([f_1])$. 
According to \cite[Theorem 1]{BH},
since the lifts $\tilde{f_0}, \tilde{f_1}\in\Homeo(S)$ represent the same element of $\SMod(S)$ and are fiber-preserving,   
there exists a fiber-preserving isotopy $g_t\in \Homeo(S)$ between $\tilde{f_0}$ and $\tilde{f_1}$.
Since $g_t$ is fiber-preserving the composition $\pi\circ g_t\circ \pi^{-1}$ is a well-defined homeomorphism of $(D, P)$ and it gives an isotopy between $f_0$ and $f_1$; hence, $[f_0]=[f_1] \in B_n$.

To see $\im(\Psi)\supset\SMod(S)$, assume that $f\in \Homeo(S)$ satisfies $[f] = [\iota \circ f\circ \iota^{-1}]$. 
The same argument as in the proof of \cite[Theorem 3]{BH} with Teichm\"uller's theorem for bordered surfaces \cite[page 59]{A} shows that $[f]$ can be represented by a homeomorphism $f' \in \Homeo(S)$ such that $f' = \iota \circ f'\circ \iota^{-1}$.

For $x \in D$ let $\tilde x \in \pi^{-1}(x) \subset S$ be a pre-image of $x$ under the branched covering map $\pi:S\to D$. 
Define a homeomorphism $b \in \Homeo(D)$ by 
$$b(x)=\pi(f'(\tilde x)).$$
Since $\pi\circ f'\circ \iota(\tilde x)=\pi\circ\iota\circ f'(\tilde x)=\pi\circ f'(\tilde x)$, the image $b(x)$ is independent of the choice of the pre-image $\tilde x$.

We observe that $b$ acts on the branch set $P$ as a permutation.
Suppose that $p \in P$ and $\tilde p \in \tilde P$ satisfy $\pi(\tilde p)=p$. 
Since $\iota$ fixes the branch set $\tilde P$ point-wise we have $\iota(f'(\tilde p))=f'(\iota (\tilde p))=f'(\tilde p)$.
That is, $f'(\tilde p)$ is a fixed point of $\iota$ and $f'(\tilde p) \in \tilde P$. 
We get $b(p)= \pi(f'(\tilde p)) \in \pi (\tilde P)=P$ which shows $b(P)=P$.

%Let $\beta \in \Mod(D; P) \simeq B_n$ be the mapping class of $b\in\Homeo(D;P)$. 
Since $\pi\circ f' = b \circ \pi$ the map $f'\in\Homeo(S)$ is the unique lift of $b\in\Homeo(D;P)$. We obtain $[f]=[f']=\Psi([b])\in \im(\Psi)$.

To see $\im(\Psi)\subset\SMod(S)$, let $\alpha^i_l$ be an simple closed curve that goes through $i$th and $(i+1)$th sheets and $l$th and $l+1$st twisted bands as depicted in Figure~\ref{torus-link}. 
Let $t_{i,l}:=t_{\alpha^{i}_{l}}\in\Homeo(S)$ be a positive Dehn twist about $\alpha^i_l$. 
It is shown in \cite[Lemma~3.1]{HKP} that for the standard braid generators $\sigma_1,\dots,\sigma_{n-1}$ of the Artin braid group $B_n$ we have 
$$\Psi(\sigma_i) =[t_{i,1} \circ t_{i,2} \circ \dots \circ t_{i,k-1}] \in \Mod(S).$$
For $j=1,\dots,n-1$ and $l=1,\dots, k$ let $\lambda^j_l$  (see Figure~\ref{torus-link}) be properly embedded arcs on the $l$th band between $j$th and $j+1$st sheets such that $\iota(\lambda^j_l)=\lambda^j_{l+1}$. 
Let $[\lambda^j_l]$ denote the isotopy class of $\lambda^j_l$ relative to $\partial S$. 
We get 
$$
\left[\iota^{-1}\circ (t_{i,1} \circ t_{i,2} \circ \dots \circ t_{i,k-1}) \circ\iota\right] \left[\lambda^j_l\right] =
\left[t_{i,1} \circ t_{i,2} \circ \dots \circ t_{i,k-1}\right] \left[\lambda^j_l\right]
$$ 
for all $j=1,\dots,n-1$ and $l=1,\dots, k$. 
Knowing that the surface $S$ cut along the union of arcs $\cup_{j, l} \lambda^j_l$ yields $n$ disjoint disks, the Alexander trick \cite[Proposition 2.8]{FM} implies that $\left[\iota^{-1}\circ (t_{i,1} \circ t_{i,2} \circ \dots \circ t_{i,k-1}) \circ\iota\right]  =
\left[t_{i,1} \circ t_{i,2} \circ \dots \circ t_{i,k-1}\right]$. 
This shows that $\Psi(\sigma_i)\in\SMod(S)$. 
\end{proof}

\section{Positive answers for simple cases}
\label{section:panswer}

\begin{theorem}
\label{theorem:main1}
We have 
$P(2)=\QP(2) = \Dehn^+(2,k)= Tight(2,k) = RV(2)$ for all $k \geq 2$.
\end{theorem}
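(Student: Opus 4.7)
The approach is to exploit the fact that $B_{2}$ is infinite cyclic, generated by $\sigma_{1}$, so every element of $B_{2}$ has the form $\sigma_{1}^{m}$ for some $m\in\Z$. My plan is to show that the two endpoints of the chain
\[
P(2)\subset \QP(2)\subset \Dehn^{+}(2,k)\subset Tight(2,k)\subset Veer^{+}(2,k)= RV(2)
\]
established in the introduction coincide with the same explicit submonoid $\{\sigma_{1}^{m}:m\geq 0\}$; the intermediate monoids are then forced to equal this set as well.

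First I would treat the left-hand end. By definition $P(2)=\{\sigma_{1}^{m}:m\geq 0\}$. Because $B_{2}$ is abelian, every conjugate of $\sigma_{1}$ equals $\sigma_{1}$ itself, and hence any product of conjugates of $\sigma_{1}$ reduces to a non-negative power of $\sigma_{1}$. This gives $\QP(2)=P(2)$ immediately.

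Next I would handle the right-hand end by showing $RV(2)=\{\sigma_{1}^{m}:m\geq 0\}$. The braid $\sigma_{1}$ represents the positive half-twist in $\Mod(D\setminus\{p_{1},p_{2}\})$, which sends every properly embedded essential arc strictly to the right; since the right-veering property is closed under composition, all non-negative powers of $\sigma_{1}$ lie in $RV(2)$. For the converse I would check directly on a single reference arc joining the two punctures that $\sigma_{1}^{-1}$ carries it to the left, or equivalently invoke that the fractional Dehn twist coefficient of $\sigma_{1}^{m}$ equals $m/2$, which is negative whenever $m<0$; hence no negative power of $\sigma_{1}$ is right-veering.

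The main obstacle, modest as it is, is the careful verification of the right-veering claim for negative powers; everything else is bookkeeping. Once both ends of the inclusion chain are identified with $\{\sigma_{1}^{m}:m\geq 0\}$, the theorem follows at once from the inclusions already established in the introduction.
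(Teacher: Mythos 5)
Your proposal is correct, and its overall logic is the same squeeze the paper uses: identify everything with the monoid of non-negative powers of $\sigma_1$ and let the chain $P(2)\subset \QP(2)\subset \Dehn^+(2,k)\subset Tight(2,k)\subset Veer^+(2,k)=RV(2)$ collapse. The one genuine difference is where the failure of right-veering is certified. The paper works upstairs: for $b=\sigma_1^{-m}$, $m>0$, it computes the lift explicitly via \cite[Lemma 3.1]{HKP}, $\Psi(b)=(T_1\circ\cdots\circ T_{k-1})^{-m}\in\Mod(S)$, observes this is not in $Veer^+(S)$, and concludes $\Psi(b)\notin Dehn^+(S)$ (and in fact $b\notin Veer^+(2,k)$), so the chain closes. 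You instead stay downstairs in $B_2$, showing $RV(2)=\{\sigma_1^m:m\geq 0\}$ via the fractional Dehn twist coefficient $c(\sigma_1^m)=m/2$ (or a direct arc check), and then invoke the identification $Veer^+(2,k)=RV(2)$ from the introduction. Your route avoids the HKP computation of $\Psi(\sigma_1)$ but leans on that braid-level/surface-level right-veering equivalence; the paper's route needs no statement about right-veering braids at all, only that $Dehn^+(S)\subset Veer^+(S)$ for the covering surface. One small imprecision: in the direct check, right-veering is tested on properly embedded arcs with endpoints on $\partial D$, not on an arc joining the two punctures; either replace your reference arc by one based at the boundary or simply rely on your FDTC argument, which is already sufficient.
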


\begin{proof}
The first equality $P(2)=\QP(2)$ is obvious. 

By Proposition \ref{subset} we only need prove $\QP(2) \supset\Dehn^+(2,k)$.
Take a braid $b \in B_2 \setminus \QP(2)$. 
There exists a positive integer $m$ such that $b = (\sigma_1)^{-m}$. 
Let $T_i = [t_{\alpha^1_i}] \in \Mod(S)$. 
By Lemma 3.1 of \cite{HKP} we have $\Psi(b)=(T_1 \circ T_2 \circ \dots \circ T_{k-1})^{-m}$. 
Clearly $(T_1 \circ \dots \circ T_{k-1})^{-m} \notin Veer^+(S)$. 
Since $Dehn^+(S) \subset Veer^+(S)$ we have $\Psi(b)\notin Dehn^+(S)$; hence, $b \notin  \Dehn^+(2,k).$
\end{proof}

\begin{theorem}
\label{theorem:main2}
For $n\leq 4$, $\QP(n) = \Dehn^+(n,2)$.
\end{theorem}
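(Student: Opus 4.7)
By Proposition~\ref{subset} we have $\QP(n)\subset \Dehn^+(n,2)$, so the plan is to prove the reverse inclusion for $n\leq 4$. The case $n=2$ is Theorem~\ref{theorem:main1}; my focus is $n\in\{3,4\}$. The overall strategy exploits the isomorphism $\Psi:B_n\to \SMod(S)$ (Proposition~\ref{prop-Birman-Hilden}): if $\Psi(b)$ admits a positive Dehn twist factorization in which every factor lifts to a quasipositive subword of $B_n$, then $b\in\QP(n)$ by injectivity of $\Psi$.

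Two basic lifting rules will drive the argument. If $C$ is $\iota$-invariant up to isotopy, taking a genuinely $\iota$-invariant representative shows that $T_C=\Psi(h)$ where $h\in B_n$ is either a conjugate of $\sigma_1$ (when $\pi(C)$ is an arc joining two branch points) or a positive full-twist braid on the branch points enclosed by $\pi(C)$ (when $\pi(C)$ is a loop bounding an odd number of branch points); either way $h$ is quasipositive. If instead $C$ is not $\iota$-invariant but has a representative disjoint from $\iota(C)$, then $\pi(C)=\pi(\iota(C))$ is a simple loop in $D\setminus P$ bounding an even number of branch points, and $T_C\circ T_{\iota(C)}=\Psi(T_\gamma)$ is $\Psi$ of a positive full-twist braid on those punctures, again quasipositive. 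Thus it suffices to rewrite any positive factorization of $\Psi(b)$ as a product whose factors are either symmetric Dehn twists or consecutive $\iota$-paired blocks $T_C\circ T_{\iota(C)}$.

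For $n=3$, $S\cong \Sigma_{1,1}$ and $\iota$ acts as $-I$ on $H_1\cong\Z^2$; essential simple closed curves on $\Sigma_{1,1}$ are non-separating and classified up to isotopy by primitive homology classes modulo sign, so every such curve is isotopic to an $\iota$-invariant one. In particular $\Dehn^+(\Sigma_{1,1})\subset\SMod(\Sigma_{1,1})$, every positive Dehn twist already lifts to a conjugate of $\sigma_1$, and any positive factorization of $\Psi(b)$ pulls back at once to a quasipositive word in $B_3$.

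For $n=4$, $S$ is the genus-one surface with two boundary components, and the involution swaps the two boundary components, making $\SMod(S)$ a proper subgroup of $\Mod(S)$. My plan here is: (i) use equivariant surface topology to isotope each $D_i$ in a given positive factorization $\Psi(b)=T_{D_1}\circ\cdots\circ T_{D_m}$ into equivariant position (either $\iota(D_i)$ is isotopic to $D_i$, or $D_i\cap\iota(D_i)=\emptyset$); (ii) using the symmetry of $\Psi(b)$ together with the small rank and explicit presentation of $\Mod(S)$, reorder the factors via commutation and braid-type relations so that every non-symmetric $T_{D_i}$ becomes adjacent to a factor representing $T_{\iota(D_i)}$; (iii) apply the lifting rules above to each resulting block. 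The hard step will be (ii): positive Dehn twist factorizations are highly non-unique, so the $\iota$-partner of a non-symmetric factor need not appear in the factorization we start with, and must be produced inside \emph{some} other positive factorization of the same element. The analysis is confined to $n\leq 4$ precisely because the surface is small enough to allow a finite combinatorial case-check, which is also consistent with Etnyre and Van Horn-Morris's expectation that Question~\ref{question:main} fails in general.
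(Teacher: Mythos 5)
Your $n\le 3$ discussion is essentially sound and close in spirit to the paper (every essential curve on the once--punctured torus is non-separating and its twist is $\Psi$ of a conjugate of $\sigma_1$), although even there a positive factorization may contain the boundary-parallel twist $T_\delta$, which is not the lift of a conjugate of $\sigma_1$ but of the positive braid $(\sigma_1\sigma_2)^6$ via the chain relation --- a case your sketch should state. Also, your first ``lifting rule'' is slightly off: for $k=2$ and an $\iota$-invariant $C$ whose projection is a loop enclosing an odd number of branch points, $T_C=\Psi\bigl(T_{\pi(C)}^{2}\bigr)$ (the \emph{square} of the full twist, cf.\ (\ref{eqn:liftC})), not $\Psi$ of the full twist itself; this is harmless for quasipositivity but should be corrected.

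The genuine gap is in the $n=4$ case, which is the real content of the theorem, and it is exactly the step you flag as ``hard'': you never produce the reordering/pairing of a non-symmetric factor with a factor representing $T_{\iota(D_i)}$, and as you note such a partner need not occur in any given positive factorization, so step (ii) is not an argument but an unproved hope; the promised ``finite combinatorial case-check'' is not carried out and it is not clear how it would go. Moreover the pairing strategy is unnecessary: the paper's proof never pairs curves at all. It first reduces to the case where every factor $T_{\Gamma_i}$ has $\Gamma_i$ non-separating (a positive twist about a separating curve is a product of positive twists about non-separating ones), then uses transitivity of $\Mod(S)$ on non-separating curves together with the fact that $\Mod(S)$ is generated by $T_{a_1},T_{a_2},T_{a_3}$ and the boundary twists --- the latter acting trivially on curves --- to write $\Gamma_i=f_i(a_1)$ with $f_i$ a word in the $T_{a_j}^{\pm1}=\Psi(\sigma_j^{\pm1})$, whence $T_{\Gamma_i}=\Psi(b_i\sigma_1 b_i^{-1})\in\Psi(QP(4))$ \emph{individually}, symmetric or not. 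Finally, the boundary factors $T_{\delta_1}^xT_{\delta_2}^y$ are handled by applying $\Theta$: the product of the $T_{\Gamma_i}$ is $\Theta$-invariant by Proposition~\ref{prop-Birman-Hilden} while $\Theta$ swaps $T_{\delta_1}$ and $T_{\delta_2}$, forcing $x=y$, and then $T_{\delta_1}^xT_{\delta_2}^x=\Psi\bigl((\sigma_1\sigma_2\sigma_3)^{4x}\bigr)$ by the chain relation. Your proposal addresses neither this individual liftability of non-separating twists nor the boundary-twist bookkeeping that forces $x=y$, so as written it does not prove $\Dehn^+(4,2)\subset\QP(4)$.
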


The equality $QP(3)=Dehn^+(3, 2)$ has been known and used in the literature. However for sake of completeness, we give a proof of this case along with the case of $n=4$. 

\begin{proof}
Since the $n=2$ case is shown in Theorem \ref{theorem:main1} we may assume $n=3$ or $4$.

Let $a_1,a_{2},\delta$ (for the $n=3$ case) and $a_1,a_2,a_3,\delta_1, \delta_{2}$ (for the $n=4$ case) be simple closed curves on $S$ as depicted in Figure~\ref{fig:mcggenerator}. 
The Dehn twists about these curves generate $\Mod(S)$, and $\Psi(\sigma_i)=T_{a_i}$ holds.
\begin{figure}[htbp]
\begin{center}
\includegraphics*[bb=71 644 328 734, width=80mm]{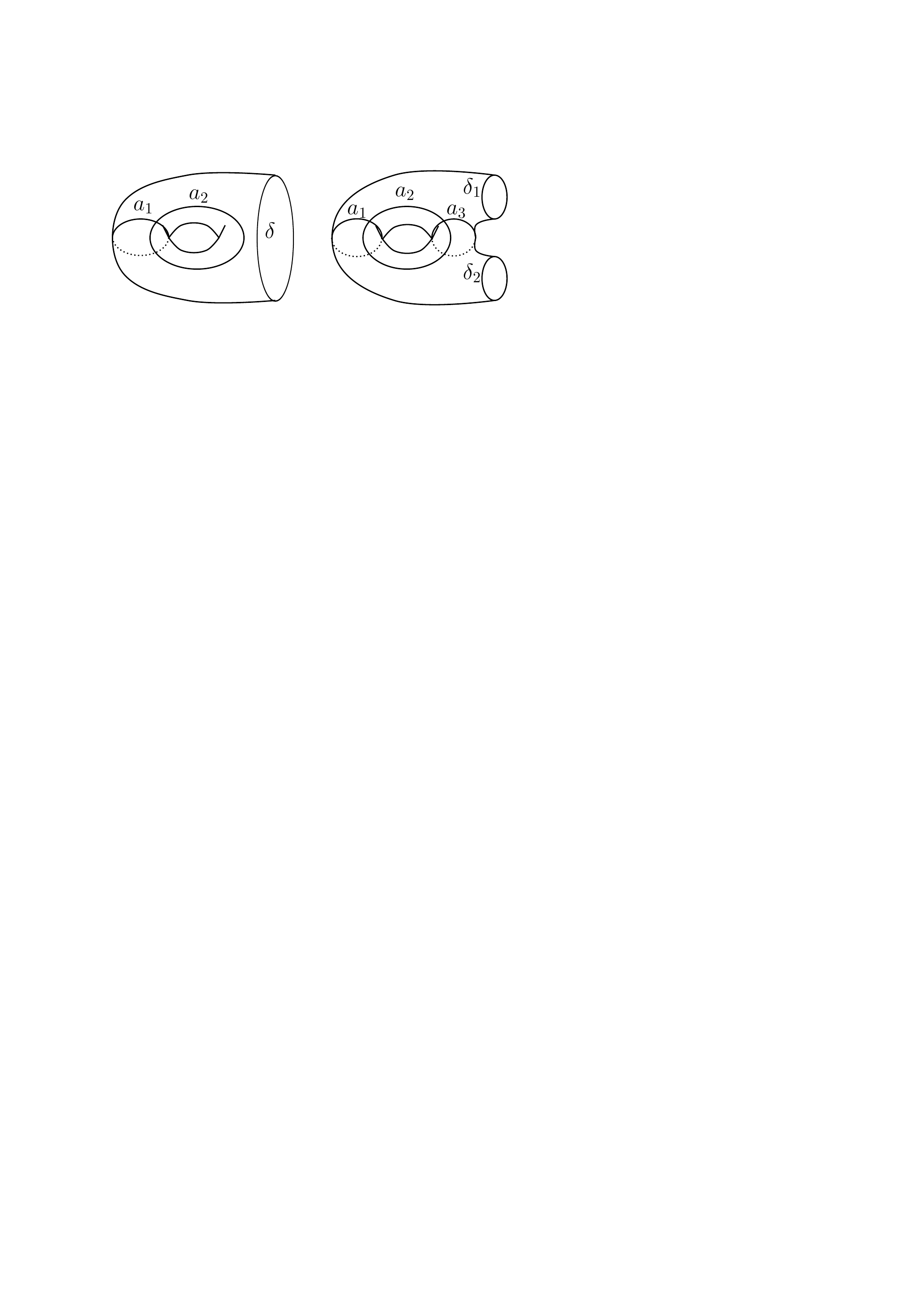}
\end{center}
\caption{}
\label{fig:mcggenerator}
\end{figure}

If $b \in Dehn^{+}(n,2)$ then there exist non-boundary parallel simple closed curves $\Gamma_1,\dots,\Gamma_m \subset S$ (possibly $\Gamma_i = \Gamma_j$ for $i\not=j$) and $x, y \geq 0$ such that 
\begin{equation*}
\Psi(b) = 
\begin{cases}
T_{\Gamma_1} \circ\dots\circ T_{\Gamma_m} \circ T_{\delta}^{x} & (n=3) \\
T_{\Gamma_1} \circ\dots\circ T_{\Gamma_m} \circ T_{\delta_1}^x \circ T_{\delta_2}^y & (n=4).
\end{cases}
\end{equation*}
We may assume that all of the curves $\Gamma_i$ are non-separating since the positive Dehn twist about a separating curve can be written as a product of the positive Dehn twists about non-separating curves. 
Since $a_1$ is non-separating $\Gamma_i = f_i(a_1)$ for some $f_i \in \Mod(S)$. We can write $f_i$ as
\[ f_i = T_{a_{j_l}}^{\e_l}\circ \cdots \circ T_{a_{j_1}}^{\e_1} \]
for some $j_p \in \{1,\ldots, n-1\}$ and $\e_p \in \{\pm 1\}$. 
Since the Dehn twist along the boundary does not affect simple closed curves in $S$, the words $T_{\delta}$, $T_{\delta_{1}}$ and $T_{\delta_2}$ are not needed in the expression.
Note that $f_{i} = \Psi(b_{i})$ for $b_{i}=\sigma_{j_l}^{\e_l}\cdots \sigma_{j_1}^{\e_1} \in B_{n}$.
Therefore, we have
\[
T_{\Gamma_i} = T_{f(a_1)}  =  f_i \circ T_{a_1} \circ f_i^{-1} = \Psi(b_i) \Psi(\sigma_{1}) \Psi(b_{i}^{-1}) = \Psi(b_{i}\sigma_1 b_{i}^{-1})
\]
and $T_{\Gamma_i} \in \Psi(QP(n))$ for all $i$.
In particular, $\Theta(T_{\Gamma_1} \circ\dots\circ T_{\Gamma_m}) = T_{\Gamma_1} \circ\dots\circ T_{\Gamma_m}$ by Proposition~\ref{prop-Birman-Hilden}.

For the case $n=4$, since $\iota(\delta_1)=\delta_2$, $\Theta(T_{\delta_{1}}) = T_{\delta_{2}}$ and $\Theta(T_{\delta_{2}}) = T_{\delta_{1}}$ which give\begin{eqnarray*}
T_{\Gamma_1} \circ\dots\circ T_{\Gamma_m}\circ T_{\delta_1}^x \circ T_{\delta_2}^y  
&=&
\Theta( T_{\Gamma_1} \circ\dots\circ T_{\Gamma_m} \circ T_{\delta_1}^x \circ T_{\delta_2}^y) \\
&=& 
\Theta(T_{\Gamma_1} \circ\dots\circ T_{\Gamma_m}) \circ\Theta(T_{\delta_1}^x) \circ \Theta(T_{\delta_2}^y)  \\
&=& 
T_{\Gamma_1} \circ\dots\circ T_{\Gamma_m}\circ  T_{\delta_2}^x \circ T_{\delta_1}^y.
\end{eqnarray*}
Therefore, $x=y$.

With the chain relations \cite[Proposition 4.12]{FM} $T_{\delta} = \Psi((\sigma_{1}\sigma_{2})^{6})$ and $T_{\delta_1}^x T_{\delta_2}^x = \Psi((\sigma_1\sigma_2\sigma_3)^{4x})$,
we can conclude $b \in QP(n)$.
\end{proof}

\section{Roots in quasi-positive braids}
\label{section:QP}

In this section we address the following question. Among the results, Corollary~\ref{prop:rootqperidic} plays an important role to prove our main theorems in Section~\ref{section:suff}.

\begin{question}
\label{question:QProot}
Are $QP(n)$ and $Dehn^{+}(n,k)$ closed under taking roots? 
That is, 

(1) Does $b=x^d \in QP(n)$ for an integer $d\geq 2$ imply $x \in QP(n)$?

(2) Does $x^d \in Dehn^{+}(n,k)$ for an integer $d\geq 2$ imply $x\in Dehn^{+}(n,k)$? 
\end{question}

\begin{definition}[Property {\bf(QP-root)}]
We say that a quasi-positive braid $b \in QP(n)$ has the property {\bf (QP-root)} if the following condition is satisfied. 
\[ \mbox{ If } b=x^{d} \mbox{ for } x \in B_{n} \mbox{ and } d \geq 2 
\mbox{ then }  x \in QP(n). \]
\end{definition}

It is shown  in \cite[Theorem 1.1]{GM} that the $d$-th root of a braid (if it exists) is unique up to conjugacy; namely, $x^{d}=y^{d} \in B_n$ implies that $x$ and $y$ are conjugate to each other. This leads to the following observation.

\begin{proposition}
\label{proposition:root}
For $x, x' \in B_{n}$ assume that $x^{d} = x'^{d}$.
Then $x \in QP(n)$ if and only if $x' \in QP(n)$.
\end{proposition}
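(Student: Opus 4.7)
The plan is to reduce the statement to two essentially trivial observations: that the $d$-th root in $B_n$ is unique up to conjugacy, and that $QP(n)$ is closed under conjugation.

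First, I would invoke the cited result \cite[Theorem 1.1]{GM}: the hypothesis $x^d = x'^d$ (with $d \geq 1$) forces $x$ and $x'$ to be conjugate in $B_n$, so there exists some $g \in B_n$ with $x' = g x g^{-1}$. Strictly speaking, one should note that the $d = 1$ case is vacuous and that for $d \geq 2$ the cited uniqueness theorem applies directly.

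Next, I would observe that $QP(n)$ is closed under conjugation. Indeed, if $x \in QP(n)$ admits a quasi-positive factorization
\[
x = \prod_{i=1}^{m} w_{i}\, \sigma_{1}\, w_{i}^{-1}, \qquad w_{i} \in B_{n},
\]
then for any $g \in B_n$ we have
\[
g x g^{-1} = \prod_{i=1}^{m} (g w_{i})\, \sigma_{1}\, (g w_{i})^{-1},
\]
which is again a product of conjugates of $\sigma_{1}$, so $g x g^{-1} \in QP(n)$. Applying this with the specific $g$ produced in the first step gives $x' = g x g^{-1} \in QP(n)$ whenever $x \in QP(n)$, and the reverse implication follows by symmetry (exchanging the roles of $x$ and $x'$, using $g^{-1}$).

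I do not anticipate any real obstacles here; the only non-trivial ingredient is the root-uniqueness theorem from \cite{GM}, which is simply quoted, and the closure of $QP(n)$ under conjugation is immediate from the definition given in Section~\ref{section:intro}. If anything, the one point to state cleanly is that the definition of quasi-positivity uses conjugates of $\sigma_{1}$ (equivalently, of any $\sigma_{i}$, since all $\sigma_{i}$ are pairwise conjugate in $B_{n}$), which is precisely what makes the conjugation-invariance transparent.
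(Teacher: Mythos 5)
Your proof is correct and follows exactly the paper's route: the statement is presented there as an immediate consequence of Gonz\'alez-Meneses' root-uniqueness theorem \cite[Theorem 1.1]{GM}, with the conjugation-invariance of $QP(n)$ (which you spell out explicitly) left implicit. Nothing is missing.
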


Let $b\in B_n$ be a non-periodic reducible braid. According to \cite[Definition 5.1]{GMW}, up to conjugacy, $b$ is in  the following {\em regular form}.

Let $C$ be an essential multi-curve in the $n$-punctured disk $D_n$ such that 
\begin{itemize}
\item
$b(C)$ is isotopic to $C$, and 
\item
any simple closed curve which has non-zero geometric intersection with $C$ must not be preserved by any power of $b$. 
\end{itemize}
Such a multi-curve $C$ is called a {\em canonical reduction system} for $b$ \cite{BLM} and it is unique up to conjugacy. 
It always exists for a non-periodic reducible braid \cite{I}.

Let $\mathcal A' $ be the set of outermost curves of $C$.
There could exist punctures in $D_n$ not enclosed by any circle in $\mathcal A'$. 
Define a set of curves $\mathcal A$ to be the union of $\mathcal A'$ and one circle around each such puncture of $D_n$. 
We may enumerate the elements of $\mathcal A$ as 
$$\mathcal{A}=\{ a_{1,1},\ldots, a_{1,r_1}, a_{2,1},\ldots, a_{2,r_2}, \dots, a_{c,1},\ldots, a_{c,r_c}\}$$ 
so that $b(a_{i,j}) = a_{i,j+1}$ $(j = 1,\ldots,r_i-1)$ and $b(a_{i,r_i}) = a_{i,1}$. 
This action of $b$ on $\mathcal A$ gives $m=r_1+r_2+\cdots+r_c$ disjoint braided tubes (see Figure~\ref{figure:redbraid} (i)). 
Identifying each tube with a string, we get an $m$-braid $\widehat{b} \in B_m$ which we call the {\em tubular braid} associated to $\mathcal A$.
By the nature of the canonical reduction system, this braid $\widehat b$ is unique up to conjugacy.  
\begin{figure}[htbp]
\begin{center}
\includegraphics*[width=120mm, bb=70 533 441 735]{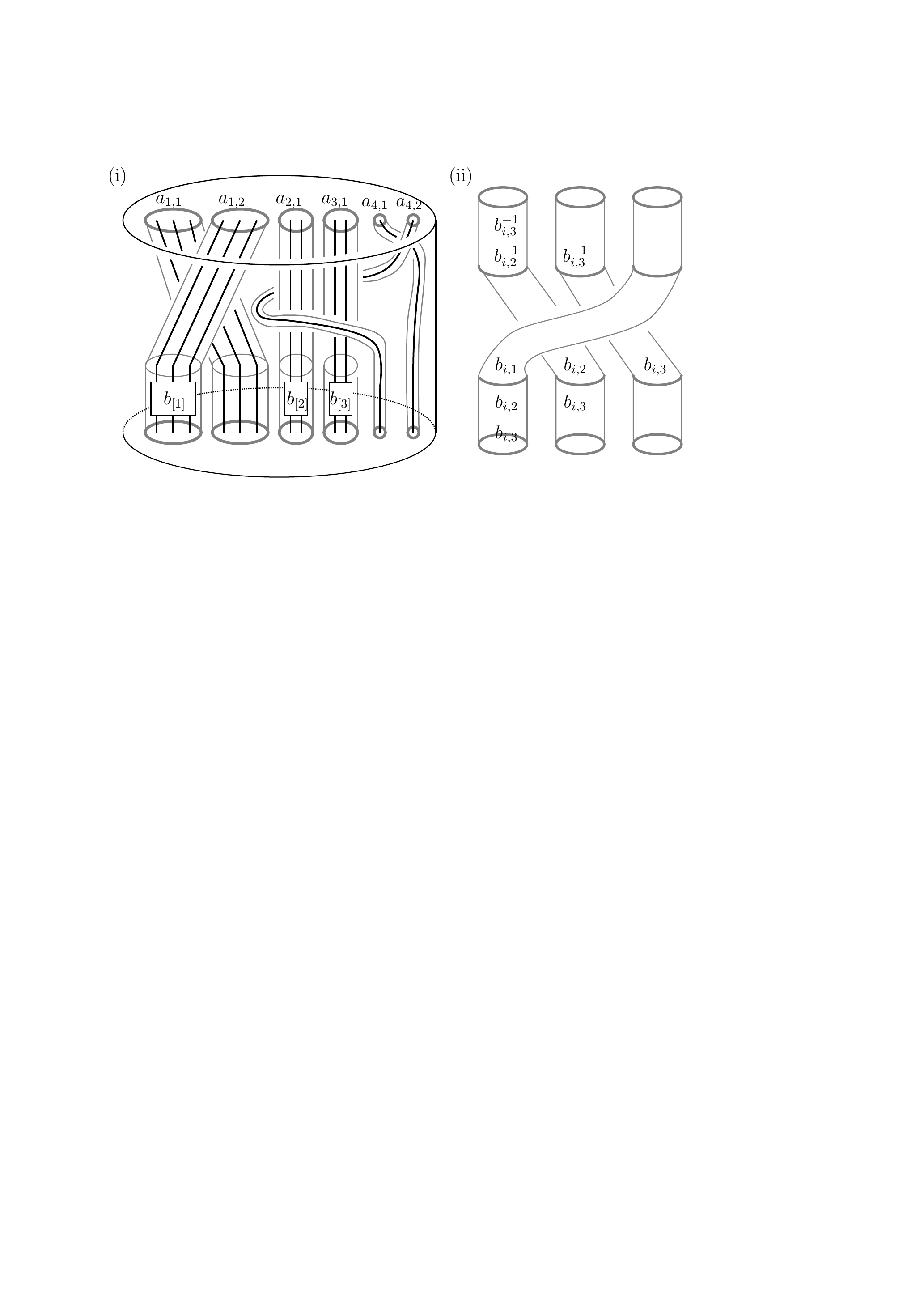}
\end{center}
\caption{ (i) Non-periodic and reducible braid in regular form. (ii) Taking a conjugate the condition (\ref{eqn:regularform}) is  satisfied.}
\label{figure:redbraid}
\end{figure}

The braid closure of $\widehat b$ gives a $c$-component link. 
The braid closure of the original braid $b$ can be viewed as a satellite of the $c$-component link. 
Let $b_{i,j}$ denote the braiding inside the tube which starts at $a_{i,j}$ and ends at $a_{i,j+1}$ (where the indices $j$ are considered up to modulo $r_{i}$), which we call the \emph{interior braids}.

We say that $b$ is in \emph{regular form} if
\begin{equation}
\label{eqn:regularform}
\mbox{the only non-trivial interior braids are } b_{1,r_1},\ldots, b_{c,r_c}.  
\end{equation}
We denote these non-trivial interior braids by $b_{[1]},\ldots,b_{[c]}$.
The condition (\ref{eqn:regularform}) can be realized by shifting non-trivial interior braid $b_{i,j}$ $(j \neq r_i)$ along the closure of the tubular braid $\widehat b$, which is equivalent to taking a conjugate (see Figure \ref{figure:redbraid} (ii)). 
In this process, the tubular braid $\widehat{b}$ remains the same and 
\begin{equation}
\label{eqn:intbraid}
b_{[i]} = b_{i,1}b_{i,2}\cdots b_{i,r_i} \mbox{ (up to conjugacy).}  
\end{equation}

\begin{remark}
In \cite{GMW} a regular form requires one more property that if $b_{[i]}$ and $b_{[j]}$ are conjugate then $b_{[i]} = b_{[j]}$. But we do not use this property in this paper. 
\end{remark}

Here is a simple observation. 
\begin{lemma}\label{lem}
If a braid $b$ is in regular form with quasi-positive tubular braid and interior braids then $b$ is also quasi-positive. 
\end{lemma}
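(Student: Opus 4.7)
The plan is to decompose $b$ into two quasi-positive factors. The first is the cabled lift of $\widehat{b}$ to $B_n$ with trivial braiding inside each tube, which I denote $\widetilde{\widehat{b}}$. The second is the product $I = \prod_{i=1}^{c} \iota_i(b_{[i]})$ of the interior braids placed inside the corresponding tubes via the natural inclusion $\iota_i$ of the interior braid group of tube $i$ into $B_n$. In regular form all non-trivial interior braiding is concentrated in the segments $b_{i,r_i}$, and because each orbit closes with $b(a_{i,r_i}) = a_{i,1}$, each tube returns to its starting position immediately after completing its tubular motion; hence the interior braid $b_{[i]}$ may be slid along the closure of tube $i$ to a position after all tubular braiding has finished, which yields the factorization $b = \widetilde{\widehat{b}} \cdot I$ in $B_n$.

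The next step is to verify that $\widetilde{\widehat{b}} \in \QP(n)$. Any positive braid lies in $\QP(n)$, since each $\sigma_j$ is conjugate to $\sigma_1$, and $\QP(n)$ is closed under conjugation via the identity $w \bigl(\prod_l c_l \sigma_1 c_l^{-1}\bigr) w^{-1} = \prod_l (wc_l)\sigma_1(wc_l)^{-1}$. The cabled lift of a tubular generator $\sigma_k^{\mathrm{tube}} \in B_m$ is the positive fat half-twist interchanging two adjacent groups of tube-strands, which is itself a positive braid in $B_n$ and therefore quasi-positive. Taking any quasi-positive factorization $\widehat{b} = \prod_j w_j \sigma_{k_j}^{\mathrm{tube}} w_j^{-1}$ and lifting it term-by-term expresses $\widetilde{\widehat{b}}$ as a product of conjugates of positive braids, so $\widetilde{\widehat{b}} \in \QP(n)$.

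For the interior factor, observe that each inclusion $\iota_i$ sends a standard generator of the interior braid group to one of the generators $\sigma_j$ of $B_n$ after relabeling strand indices, so $\iota_i$ preserves quasi-positivity; thus $\iota_i(b_{[i]}) \in \QP(n)$ for every $i$, and their product $I \in \QP(n)$ since $\QP(n)$ is a monoid. Multiplying the two factors yields $b = \widetilde{\widehat{b}} \cdot I \in \QP(n)$, as desired.

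I expect the only real subtlety to be the decomposition $b = \widetilde{\widehat{b}} \cdot I$ itself; this is a diagrammatic observation specific to the regular form, where the non-trivial interior braids occupy segments whose two endpoints coincide by orbit closure, so they can be freely isotoped outside the tubular braiding without changing the underlying element of $B_n$.
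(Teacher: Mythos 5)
The paper offers no proof of this lemma (it is stated as ``a simple observation''), so the only question is whether your argument is complete. Your decomposition $b=\widetilde{\widehat{b}}\cdot I$ and the treatment of the interior factor are fine: in regular form the nontrivial interior braiding sits in the last tube segment of each orbit, so it can be isotoped (rel endpoints, not by conjugation) to the end of the braid, where it is supported in the disjoint disks bounded by the curves $a_{i,1}$; the corresponding inclusions of braid groups send positive bands to positive bands, hence $I\in QP(n)$.

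The gap is in the step asserting $\widetilde{\widehat{b}}\in QP(n)$ by lifting a quasi-positive factorization of $\widehat{b}$ term-by-term to ``conjugates of positive braids.'' That reasoning is valid only when the two tubes exchanged by each band carry the same number of strands. Tubes arising from a canonical reduction system together with the circles around leftover punctures generally enclose different numbers of strands, and a band $w\sigma_k w^{-1}$ whose underlying transposition swaps tubes of sizes $p\neq q$ does not lift to a conjugate of a fat positive crossing: the cabled copies of $w$ needed on the two sides of the fat crossing are taken with respect to different groupings of the $n$ strands and are in general different elements of $B_n$, so the lifted factor has the form $A P B^{-1}$ with $A\neq B$. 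Concretely, cabling the band $\sigma_2\sigma_1\sigma_2^{-1}\in B_3$ with tube sizes $(1,1,2)$ gives (with the usual conventions) $\sigma_2\sigma_3\,\sigma_1\sigma_2\,\sigma_3^{-1}\in B_4$; its exponent sum is $2+2-1=3$, while the fat crossing of blocks of sizes $1$ and $2$ has exponent sum $2$, so the lifted factor is not conjugate to the positive braid your argument provides, and no other positive braid is exhibited. The needed statement, that the trivially cabled lift of a single positive band is quasi-positive, is in fact true --- in the example, $\sigma_2\sigma_3\sigma_1\sigma_2\sigma_3^{-1}=\sigma_2\cdot\sigma_1\cdot(\sigma_3\sigma_2\sigma_3^{-1})$ is a product of three positive bands --- but when $p\neq q$ it requires a separate argument (an explicit band decomposition of the cabled band, or an appeal to Rudolph-type results that cabling preserves quasipositivity); as written, the term-by-term lifting does not establish it, and this is precisely the point where the lemma needs an argument rather than conjugation-invariance.
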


The converse of Lemma~\ref{lem} is not true. 
The reducible $4$-braid $(\sigma_{2}\sigma_{3}\sigma_{2}^{-1})(\sigma_{1}\sigma_{2}\sigma_{1}^{-1}) = (\sigma_{2} \sigma_{1}\sigma_{3}\sigma_{2})\sigma_{1}^{-2}$ is quasi-positive and in regular form with the tubular braid $\sigma_1 \in B_2$ and the interior braid $\sigma_{1}^{-2}\in B_2$.

The next proposition gives a criterion of the property {\bf (QP-root)}.

\begin{proposition}\label{prop:reducible}
Let $b$ be a non-periodic reducible, quasi-positive braid. If $b$ has a regular form such that all of its tubular and interior braids are quasi-positive and having the property {\bf (QP-root)}, then $b$ also has the property {\bf (QP-root)}.
\end{proposition}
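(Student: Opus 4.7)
The plan is to build a regular form for $x$ with respect to the same multi-curve that puts $b$ in regular form, decompose the $d$-th root of $b$ into its effect on the tubular braid and on each interior braid, and then apply Lemma~\ref{lem}.

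First I would verify that $x$ is itself non-periodic and reducible. If $x$ were periodic or pseudo-Anosov, then so would be $b=x^d$, contradicting the hypothesis; hence $x$ is non-periodic reducible. I would then invoke the classical fact that the canonical reduction system of a non-periodic mapping class coincides with that of any non-zero power, so the canonical reduction systems of $x$ and $b$ agree, and $x$ preserves the outermost family $\mathcal{A}$ used in the regular form of $b$. Up to conjugation---harmless since $QP(n)$ is conjugation invariant---I may therefore assume that $x$ itself is in regular form with respect to $\mathcal{A}$, with tubular braid $\widehat{x}\in B_m$ and interior braids $x_{[O_1]},\ldots,x_{[O_{c'}]}$ indexed by the $x$-orbits on $\mathcal{A}$.

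Next I would treat the tubular and interior pieces separately. The tubular braid construction is functorial under taking powers (up to conjugacy), so $(\widehat{x})^d=\widehat{b}$ in $B_m$; since $\widehat{b}$ has property {\bf (QP-root)} by hypothesis, $\widehat{x}\in QP(m)$ follows at once. For the interior braids, fix an $x$-orbit $O$ of size $s$ and a base tube $\tau\in O$. The $b$-orbit of $\tau$ lies inside $O$ and has length $r=s/\gcd(s,d)$, so with $d'=d/\gcd(s,d)$ the identity $b^{r}=x^{rd}=(x^s)^{d'}$ evaluated at $\tau$ yields $b_{[i]}=(x_{[O]})^{d'}$ up to conjugation, where $b_{[i]}$ is the interior braid associated to the $b$-orbit through $\tau$. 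By hypothesis $b_{[i]}$ is quasi-positive with property {\bf (QP-root)}, and both conditions are conjugation invariant (Proposition~\ref{proposition:root} for the second). If $d'=1$ then $x_{[O]}$ is conjugate to $b_{[i]}$, hence quasi-positive; if $d'\geq 2$ then applying property {\bf (QP-root)} to $(x_{[O]})^{d'}$ gives $x_{[O]}\in QP$ directly. Having verified that each piece of the regular form of $x$ is quasi-positive, Lemma~\ref{lem} will yield $x\in QP(n)$.

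The main obstacle will be Step~1: rigorously matching the canonical reduction systems of $x$ and $b$ and arranging their regular forms simultaneously, since the regular form is only unique up to conjugation and the $x$-orbits versus $b$-orbits on $\mathcal{A}$ must be coordinated. A secondary delicate point appears in Step~2 when identifying $b_{[i]}$ with $(x_{[O]})^{d'}$: the interior braid $b_{[i]}$ is only well defined up to conjugation, and the identification depends on which of the $\gcd(s,d)$ many $b$-orbits inside $O$ one uses to read it off.
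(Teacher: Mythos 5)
Your proposal is correct and follows essentially the same route as the paper's proof: show $x$ is non-periodic reducible, put it in regular form with respect to the same outermost curve system, deduce quasi-positivity of $\widehat{x}$ from the {\bf (QP-root)} property of $\widehat{b}$ (via the conjugacy $\widehat{b}\sim(\widehat{x})^d$), deduce quasi-positivity of each $x$-interior braid from the fact that some $b_{[i]}$ is conjugate to a power of it, and conclude with Lemma~\ref{lem}. Your orbit-length bookkeeping ($d'=d/\gcd(s,d)$ and the first-return identification $b_{[i]}\sim(x_{[O]})^{d'}$) just makes explicit the step the paper states tersely, so no gap remains.
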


\begin{proof}
Let $b$ be a non-periodic reducible, quasi-positive braid. 
Assume that $b=x^{d}$ for some $x \in B_{n}$ and $d\geq 1$. 
We will show that $x$ is quasi-positive. 
Suppose that $b$ is in regular form with quasi-positive tubular braid $\widehat b \in B_m$ and quasi-positive interior braids $b_{[1]},\cdots, b_{[c]}$ all of which have the property {\bf (QP-root)}.

Since $b$ is non-periodic and reducible, the root $x$ is also non-periodic and reducible. 
We may assume that $x$ is in regular form with tubular braid $\widehat x \in B_m$ and interior braids $x_{[1]},\ldots, x_{[c']}$.  
Since $b=x^{d}$ we see that $c'\leq c$ and $\widehat b$ is  conjugate to $(\widehat x)^d$. 
That is, there exists $y \in B_m$ such that $\widehat b = y (\widehat x)^d y^{-1} = (y \widehat x y^{-1})^d$. 
By the property {\bf (QP-root)} of $\widehat b$, the tubular braid $y\widehat x y^{-1}$ (hence, $\widehat x$) is quasi-positive.

As for the interior braids, each $b_{[i]}$ is conjugate to a power of some single interior braid $x_{[i']}$. 
Moreover, for each $i' \in\{1,\ldots, c'\}$ there exists $i\in\{1,\ldots, c\}$ such that $b_{[i]}$ is conjugate to a power of  $x_{[i']}$.
Therefore, by the property {\bf (QP-root)} of $b_{[1]},\ldots,b_{[c]}$, all the interior braids $x_{[1]},\ldots, x_{[c']}$ are quasi-positive. 

By Lemma~\ref{lem} we conclude that $x$ is quasi-positive. 
%Although $x^{d}$ admits a factorization as tubular braids $\widehat{b}$ and interior braids, it may not be in a regular form because the condition (\ref{eqn:regularform}) may fail. 
%By taking conjugate we may modify $b=x^{d}$ in a regular form. Then $(\widehat{x})^{d}=\widehat{b}$, and by (\ref{eqn:intbraid}) each interior braid of $b$ is some powers of an interior braid of $x$. }
%By the assumption, this implies that both the tubular and interior braids of $x$ are quasi-positive. Therefore $x$ is also quasi positive. 
\end{proof}

A classical theorem of Ker\'ej\'art\'o and Eilenberg states that any  root of a periodic $n$-braid is conjugate to either
$(\sigma_1 \sigma_2 \sigma_3 \cdots \sigma_{n-1})^{i}$ or $(\sigma_{1}^{2}\sigma_2 \sigma_3 \cdots \sigma_{n-1})^{i}$ for some $i$. Thus, every periodic braid has the property {\bf (QP-root)}. 
This fact and Proposition~\ref{prop:reducible} imply the following corollary.

\begin{corollary}
\label{prop:rootqperidic} 
Let $b\in QP(n)$ admitting a factorization $b= T_{a_1}^{N_1} \circ \cdots \circ T_{a_p}^{N_p}\in \Mod(D_n)=B_n$ where $a_1,\ldots,a_p$ are pairwise disjoint simple closed curves in $D_{n}$ and $N_1, \cdots, N_p >0$.
If $b=x^d$ for some $d\geq 2$ then $x \in QP(n)$. 
\end{corollary}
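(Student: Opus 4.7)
The plan is to induct on the number of strands $n$. The base cases $n\le 2$ are immediate: $B_1$ is trivial, and in $B_2\cong\mathbb{Z}$ the quasi-positive elements are exactly the nonnegative powers of $\sigma_1$, whose $d$-th roots (when they exist) are again nonnegative powers of $\sigma_1$.

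For the inductive step I assume the corollary in $B_{n'}$ for all $n'<n$ and write $b = T_{a_1}^{N_1}\circ\cdots\circ T_{a_p}^{N_p}$; after combining repeated isotopy classes I may assume the $a_i$ are pairwise non-isotopic and essential. I then split on the Nielsen-Thurston type of $b$. If $b$ is periodic, then, being a pure braid, it must equal $\Delta^{2k}$ for some $k\ge 1$ (positivity of the exponent sum rules out $k\le 0$). By the Ker\'ej\'art\'o-Eilenberg theorem, $x$ is conjugate to $(\sigma_1\cdots\sigma_{n-1})^i$ or $(\sigma_1^2\sigma_2\cdots\sigma_{n-1})^i$; matching exponent sums in $x^d=\Delta^{2k}$ forces $i>0$, so $x$ is a positive braid, hence quasi-positive.

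In the reducible non-periodic case, let $\mathcal{A}=\{A_1,\ldots,A_m\}$ consist of the outermost curves among $\{a_1,\ldots,a_p\}$ together with a small circle around each puncture enclosed by no $a_i$. The crucial preliminary is a centralizer argument: $x$ commutes with $b$ since $x^d=b$, and conjugating the factorization of $b$ by $x$ gives $\prod T_{x(a_i)}^{N_i}=\prod T_{a_i}^{N_i}$. Because the group generated by Dehn twists about pairwise disjoint essential non-isotopic curves is free abelian, this forces $x$ to permute the isotopy classes $\{[a_i]\}$ while preserving the $N_i$'s. Outermostness and the set of lone punctures are preserved under such a permutation, so $x$ also preserves $\mathcal{A}$ as a multi-curve. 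I then run a tubular/interior decomposition with respect to $\mathcal{A}$ for both $b$ and $x$, parallel to the proof of Proposition~\ref{prop:reducible}.

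The tubular braid $\widehat{b}\in B_m$ is trivial: $b$ fixes each $A_i$ setwise and the Dehn twist about an outermost $A_i\in\{a_j\}$ contributes only framing, which is invisible in $B_m$. Consequently $\widehat{x}^d$ is conjugate to $1$, and torsion-freeness of $B_m$ forces $\widehat{x}=1$. Each interior braid $b_{[i]}$ lives in $B_{n_i}$ with $n_i<n$ and is again a product of positive Dehn twists about pairwise disjoint essential curves: the contributions are $T_{a_j}^{N_j}$ for $a_j$ nested strictly inside $A_i$, together with $\Delta^{2N_{j_0}}$ (the Dehn twist about $\partial D_{A_i}$) when $A_i$ itself equals some $a_{j_0}$. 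By the inductive hypothesis $b_{[i]}$ has property \textbf{(QP-root)}, and because $\widehat{x}=1$ we have $x_{[i]}^d=b_{[i]}$; therefore $x_{[i]}\in \QP(n_i)\subseteq \QP(n)$. Finally, $\widehat{x}=1$ implies $x=x_{[1]}\cdots x_{[m]}$, a product of quasi-positive braids supported on disjoint tubes, so $x\in \QP(n)$. The main obstacle is the centralizer step: it is what lets us sidestep the canonical reduction system and set up the induction on the combinatorially convenient multi-curve $\mathcal{A}$ directly.
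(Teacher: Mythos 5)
Your overall strategy (periodic case via the Ker\'ej\'art\'o--Eilenberg theorem, reducible case via a tube decomposition along an invariant multicurve plus induction on the number of strands) is essentially the route the paper takes, which deduces the corollary from the periodic-root fact together with Proposition~\ref{prop:reducible}; your tubular/interior analysis is a hands-on special case of that proposition. However, there is a genuine gap at your very first reduction, ``I may assume the $a_i$ are pairwise non-isotopic and essential.'' Curves bounding a disk or encircling a single puncture may indeed be discarded, but a curve isotopic to $\partial D_{n}$ (enclosing all $n$ punctures) cannot: its twist is the nontrivial central element $\Delta^{2}$, so dropping it changes $b$, and the corollary as stated allows such curves. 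If instead you keep such a curve $a_{j_0}$, your construction breaks down: $a_{j_0}$ is then the unique outermost curve among the $a_i$, so $\mathcal{A}=\{a_{j_0}\}$, there is a single tube containing all $n$ punctures, the interior braid is $b$ itself (with the twist about $a_{j_0}$ contributing a central full twist), and the inductive claim that each $b_{[i]}$ lives in $B_{n_i}$ with $n_i<n$ fails, making the induction circular. Equivalently, if one builds $\mathcal{A}$ from the outermost \emph{essential} curves only, the boundary-parallel multiplicity descends to the tubular braid, which is then $\Delta_m^{2N}$ rather than trivial; your step ``$\widehat{x}=1$, hence $x=x_{[1]}\cdots x_{[m]}$'' is no longer available, and one must treat $\widehat{x}$ as a root of a central (periodic) braid and reassemble via Lemma~\ref{lem} and the regular-form argument of Proposition~\ref{prop:reducible} --- which is exactly how the paper's route absorbs this case. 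The fix is routine, but as written your argument does not cover it.

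Two smaller points. First, you never rule out the pseudo-Anosov alternative for $b$; it is vacuous (a positive multitwist is reducible if some $a_i$ is essential and otherwise is a power of $\Delta^{2}$, hence periodic or trivial), but it should be said, and in the periodic case you should also allow the degenerate value $k=0$, where $x^{d}=1$ forces $x=1$ by torsion-freeness. Second, the centralizer step needs more than ``twists about disjoint non-isotopic curves generate a free abelian group'': from $\prod T_{x(a_i)}^{N_i}=\prod T_{a_i}^{N_i}$ you are comparing multitwists supported on two a priori different curve systems, so you need the uniqueness of the weighted multicurve of a positive multitwist (equivalently, that its canonical reduction system is precisely its essential support). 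That fact is true and standard, so this is an imprecision rather than an error --- but once invoked in this form, your argument is really the same canonical-reduction-system mechanism that the paper channels through Proposition~\ref{prop:reducible}, together with Proposition~\ref{proposition:root} for conjugacy of roots.
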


The above results show that Question \ref{question:QProot}, whether a quasi-positive braid has the property {\bf (QP-root)}, is reduced to pseudo-Anosov braids.

The next proposition gives a potential negative answer to  Question \ref{question:QProot}.

\begin{proposition}
\label{prop:counterexample}
Let %$p$ be a prime number and 
$b \in QP(n)$ be a pseudo-Anosov quasi-positive braid.
Let $Ab: B_{n} \rightarrow \Z$ denote the abelianization (exponent sum) map. 
%If $Ab(b)=p$ and $b$ admits a $p$-th root, 
If $b=x^d$ for some $d>1$ and $x\in B_n$ with $Ab(x)=1$ 
then $b$ does not have the property {\bf (QP-root)}. 
\end{proposition}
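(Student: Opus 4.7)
The approach is proof by contradiction: I would assume $x \in QP(n)$ and derive that $b = x^d$ cannot be pseudo-Anosov, contradicting the hypothesis. The key leverage is that a quasi-positive braid with exponent sum equal to $1$ must be a single conjugate of $\sigma_1$, and such a braid is reducible in the Nielsen-Thurston sense.

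First, I would translate the abelianization hypothesis into structural information on $x$. Suppose $x \in QP(n)$ and write $x = \prod_{i=1}^{N} w_{i}\sigma_{1}w_{i}^{-1}$ as a product of $N$ conjugates of $\sigma_1$. Applying $Ab$, which is conjugation-invariant since it factors through the abelianization, yields $Ab(x) = N \cdot Ab(\sigma_1) = N$. Combined with the assumption $Ab(x) = 1$, this forces $N = 1$, so $x = w\sigma_1 w^{-1}$ for a single $w \in B_n$.

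Next, I would invoke the Nielsen-Thurston classification together with the fact that it is a conjugacy invariant. We may assume $n \geq 3$, since $B_2 \cong \Z$ contains no pseudo-Anosov elements and hence no such $b$ exists there. For $n \geq 3$, the braid $\sigma_1 \in B_n$ is reducible: any essential simple closed curve $C$ in $D_n$ enclosing exactly the first two punctures is preserved up to isotopy by $\sigma_1$. Therefore $x = w\sigma_1 w^{-1}$ is also reducible, with reducing curve $w(C)$. But then $b = x^d$ preserves the same curve $w(C)$ up to isotopy, so $b$ is reducible, contradicting the assumption that $b$ is pseudo-Anosov. Hence $x \notin QP(n)$, so $b$ has a root which is not quasi-positive and therefore does not have the property \textbf{(QP-root)}. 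There is no real technical obstacle; the argument is essentially a one-line consequence of the exponent-sum computation together with Nielsen-Thurston invariance under conjugation and powers.
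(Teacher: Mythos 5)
Your proof is correct and follows essentially the same route as the paper: the exponent-sum count forces $x$ to be a single conjugate of $\sigma_1$ (a positive half twist about an arc joining two punctures), which is reducible, so $b=x^d$ cannot be pseudo-Anosov. Your added details (the explicit reducing curve $w(C)$ and the $n=2$ edge case) only make explicit what the paper leaves implicit.
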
 
\begin{proof}
%If $x \in B_n$ is a $p$-th root of $b$ then $Ab(x)=1$. 
Since $Ab(x)=1$, if $x$ is quasi-positive then $x$ must be the positive half twist about a simple arc connecting two distinct punctures of $D_{n}$. 
Such a mapping class $x$ is reducible. Hence, $x^d$ is reducible and $b=x^d$ cannot be pseudo-Anosov. 
\end{proof}

At this time of writing, we do not know whether a pseudo-Anosov quasi-positive braid that satisfies the assumption of Proposition~\ref{prop:counterexample} exists or not.

\section{Sufficient conditions for quasi-positive}
\label{section:suff}

In this section, we study quasi-positive braids admitting certain  symmetric conditions. 
Throughout this section, we fix a hyperbolic structure on $S$ so that the deck transformation $\iota=\iota_k:S \rightarrow S$ is an isometry (see \cite[Theorem 11.6]{FLP}). 
Let $S' \subsetneq S$ be a connected subsurface of $S$ that satisfies one of the following conditions. 
\begin{itemize}
\item $S'$ is an annular neighborhood of a geodesic simple closed curve in $S$.
\item $S'$ is a hyperbolic surface with geodesic boundary and the inclusion $S' \hookrightarrow S$ is an isometry.  
\end{itemize}
Note that the surface $S'_{i}:=\iota^{i-1}(S') \subset S$ also satisfies the same property.
For $f \in \Homeo(S')$ let $\widehat{f} \in\Homeo(S)$ be a homeomorphism extending $f$ such that $\widehat{f}(x)=x$ for $x \in S \setminus S'$. 
For $i=1,\ldots,k$ let $$f_i:=\iota^{i-1} \circ \widehat{f} \circ \iota^{-i+1} \in \Homeo(S).$$ 
Our goal is to study elements $\Psi(b)=[\phi] \in \SMod(S) \cap Dehn^+(S)$ of the form
\[ \phi=f_{1}f_{2}\cdots f_k \in \Homeo(S)\] 
and find sufficient conditions %on $[\phi]$ 
that guarantees $b \in QP(n)$.

We first study the following special case.

\begin{theorem}
\label{theorem:twist}
Let $C$ be a simple closed geodesic curve in $S$ such that $C, \iota(C), \dots, \iota^{e-1}(C)$ are pairwise disjoint with $\iota^e(C)=C$ for some $e\in\{1,\dots,k\}$ that divides $k$. 
Let $d, j \in \N$. 
Suppose that $b^d \in \Dehn^+(n, k)$
with  
$$
\Psi(b^d)= \left(T_C \circ T_{\iota(C)} \circ T_{\iota^2(C)} \circ\dots\circ T_{\iota^{e-1}(C)}\right)^j.$$ 
Then $b \in QP(n)$ $($and so $b^d \in QP(n)).$
\end{theorem}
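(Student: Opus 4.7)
The plan is to descend the hypothesis to an equality of braids in $B_n$ involving a single Dehn twist, and then to invoke Corollary~\ref{prop:rootqperidic}. The key object is the image curve $\gamma := \pi(C) \subset D \setminus P$. Because $C, \iota(C), \ldots, \iota^{e-1}(C)$ are pairwise disjoint and $\iota^{e}(C)=C$, the restriction $\pi|_{C} : C \to \gamma$ factors through the quotient $C/\langle \iota^{e}|_{C}\rangle$ and is a covering of degree $k/e$; since $C$ is a simple closed geodesic disjoint from the branch set $\widetilde P$, the image $\gamma$ is an embedded essential simple closed curve in $D_n$, and
$$\pi^{-1}(\gamma) \;=\; C \sqcup \iota(C) \sqcup \cdots \sqcup \iota^{e-1}(C).$$

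Next I would establish the classical lift-of-Dehn-twist formula in a cyclic branched cover, namely
$$\Psi\bigl(T_\gamma^{k/e}\bigr) \;=\; T_{C}\circ T_{\iota(C)}\circ\cdots\circ T_{\iota^{e-1}(C)}.$$
This follows by a direct model computation in an annular neighborhood $A$ of $\gamma$: the preimage $\pi^{-1}(A)$ consists of $e$ disjoint annuli permuted by $\iota$, each being a $(k/e)$-fold cyclic cover of $A$. The unique lift of $T_\gamma$ fixing $\partial S$ pointwise acts as a fractional twist by $e/k$ on each preimage annulus, so the $(k/e)$-th power $T_\gamma^{k/e}$ lifts to one full positive Dehn twist about each component of $\pi^{-1}(\gamma)$. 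Combining this with the hypothesis gives $\Psi(b^{d})=\Psi(T_\gamma^{jk/e})$, and injectivity of $\Psi$ (Proposition~\ref{prop-Birman-Hilden}) yields the braid equality $b^{d}=T_{\gamma}^{jk/e}$ in $B_n$.

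To finish, note that $\gamma$ bounds a sub-disk of $D$ containing some $m\geq 2$ branch points (the lower bound holding because $C$ is essential), so $T_\gamma$ is conjugate in $B_n$ to the positive full twist $\Delta_m^{2} = (\sigma_1\sigma_2\cdots\sigma_{m-1})^{m}$ on those $m$ strands. Thus $T_\gamma \in P(n)\subset QP(n)$ and $b^{d}=T_\gamma^{jk/e}\in QP(n)$. If $d=1$ we are already done. If $d\geq 2$, the factorization $b^{d}=T_\gamma^{jk/e}$ meets the hypotheses of Corollary~\ref{prop:rootqperidic}, since it is a product of positive powers of a single (trivially pairwise disjoint) simple closed curve in $D_n$; that corollary then gives $b\in QP(n)$.

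The main obstacle I anticipate is rigorously justifying the lifting formula in the second step. One must verify that the \emph{unique} lift of $T_\gamma$ fixing $\partial S$ pointwise is the fractional twist described above in the standard annular chart of the $(k/e)$-fold cyclic cover, and track orientations so that the $(k/e)$-th power produces \emph{positive} Dehn twists about the $\iota^{i}(C)$ rather than their inverses. A secondary subtlety is confirming that $\gamma$ is essential in $D_n$ and that $m\geq 2$, so that $T_\gamma$ is a nontrivial braid and the factorization is legitimate for Corollary~\ref{prop:rootqperidic}; both follow once the hyperbolic structure on $S$ is used to arrange $C\cap\widetilde P=\emptyset$ and $C$ to be essential.
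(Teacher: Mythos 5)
Your argument for the main case coincides with the paper's: project to $\gamma=\pi(C)$, prove the lifting formula $\Psi(T_\gamma^{k/e})=T_C\circ T_{\iota(C)}\circ\cdots\circ T_{\iota^{e-1}(C)}$ via the annular model, use injectivity of $\Psi$ to get the braid equality $b^d=T_\gamma^{jk/e}$, and finish with Corollary~\ref{prop:rootqperidic}. That part is fine.

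The genuine gap is your blanket claim that $C$ is disjoint from $\widetilde P$ and hence that $\gamma$ is always a simple closed curve; you cannot ``arrange $C\cap\widetilde P=\emptyset$'' by choosing the hyperbolic structure, because $C$ is given in the hypothesis, and there are central instances of the theorem in which $C$ genuinely passes through branch points. Concretely, when $k=2$ and $e=1$ an $\iota$-invariant non-separating curve such as $a_1$ (with $\Psi(\sigma_1)=T_{a_1}$) contains two fixed points of the hyperelliptic involution, and $\pi(C)$ is then a simple proper \emph{arc} joining two punctures, not a closed curve; the hypothesis $\Psi(b^d)=T_{a_1}^{j}$ is exactly the situation of Corollary~\ref{cor:froot}, so this case cannot be excluded. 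The paper treats it separately: there $\Psi(h)=T_C$ for the positive half twist $h$ about the arc $\pi(C)$, injectivity gives $b^d=h^{j}$, and since $h$ is not a power of a Dehn twist one cannot quote Corollary~\ref{prop:rootqperidic} directly; instead the paper uses the abelianization to write $j=Ab(b)\cdot d$ and invokes Proposition~\ref{proposition:root} (uniqueness of roots up to conjugacy) to conclude $b\in QP(n)$. Your proof as written is silent on this case, and the sentence dismissing it is false. The omission is repairable without changing your overall strategy — for instance, in the arc case note that $h^{2}$ is the positive Dehn twist about the boundary of a neighborhood of the arc, so $b^{2d}=T^{\,j}$ of that curve and Corollary~\ref{prop:rootqperidic} applies to the root $b$ of $b^{2d}$ — but some such argument must be supplied.
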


When $e=1$ we get the following.
\begin{corollary}
\label{cor:froot}
Let $j,d\in\N$. 
If $b^d \in Dehn^+(n, k)$ with $\Psi(b^d)= T_C^j$ then $b \in QP(n)$. 
\end{corollary}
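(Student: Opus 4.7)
The plan is to identify an explicit positive braid whose $\Psi$-image equals $T_C\circ T_{\iota(C)}\circ\cdots\circ T_{\iota^{e-1}(C)}$, pull the resulting equation in $\Mod(S)$ back to $B_n$ via injectivity of $\Psi$, and then invoke Corollary~\ref{prop:rootqperidic} to extract quasipositivity of $b$.

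First I would analyze the downstairs image $c:=\pi(C)\subset D\setminus P$. Since $C$ is essential (being a geodesic) and avoids the branch points, the hypothesis that $C,\iota(C),\ldots,\iota^{e-1}(C)$ are pairwise disjoint with $\iota^{e}(C)=C$ forces the setwise stabilizer of $C$ in the deck group to be exactly $\langle\iota^{e}\rangle$, and this stabilizer acts freely on $C$ (a non-identity deck transformation of a cyclic branched cover fixes points only at branch points). Consequently $\pi|_{C}\colon C\to c$ is a $(k/e)$-fold cyclic covering, $c$ is an essential simple closed curve in $D_n$ enclosing some $1\leq r\leq n$ punctures, and the Dehn twist $T_c\in B_n$ is the full twist on those strands, hence a positive braid.

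Next I would establish the lifting identity
\[ \Psi(T_c^{k/e}) \;=\; T_C\circ T_{\iota(C)}\circ\cdots\circ T_{\iota^{e-1}(C)}. \]
Choose an annular neighborhood $A$ of $c$ in $D\setminus P$ so that $\pi^{-1}(A)=\widetilde A_{0}\sqcup\cdots\sqcup\widetilde A_{e-1}$, with each $\widetilde A_{i}$ an annular neighborhood of $\iota^{i}(C)$ and $\pi\colon\widetilde A_{i}\to A$ a $(k/e)$-fold cyclic cover. A direct computation in annular coordinates shows that a full Dehn twist on $A$ lifts to a fractional twist by $e/k$ on each $\widetilde A_i$, so $T_c^{k/e}$ lifts to a genuine full Dehn twist about $\iota^{i}(C)$, which restricts to the identity on $\partial\widetilde A_i$ and therefore extends by the identity to all of $S$. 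Combining across the $e$ components gives the displayed identity.

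Finally, taking $j$-th powers yields $\Psi(T_c^{jk/e})=\Psi(b^d)$, so injectivity of $\Psi$ (Proposition~\ref{prop-Birman-Hilden}) gives $T_c^{jk/e}=b^d$ in $B_n$. If $d=1$, then $b=T_c^{jk/e}$ is a positive braid and we are done. If $d\geq 2$, then $T_c^{jk/e}$ is a positive power of a single Dehn twist about a simple closed curve in $D_n$, which matches the factorization hypothesis of Corollary~\ref{prop:rootqperidic}; applying that corollary to $T_c^{jk/e}=b^d$ delivers $b\in QP(n)$. The parenthetical $b^d\in QP(n)$ then follows because $QP(n)$ is closed under products. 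The main obstacle is the lifting identity in the second paragraph; once that is verified the rest is formal assembly of previously established facts.
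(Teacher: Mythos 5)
Your argument for the case where $C$ misses the branch points is essentially the paper's: it reproves the lifting identity (equation (\ref{eqn:liftC})), pulls $\Psi(b^d)=\Psi(T_{\pi(C)}^{jk/e})$ back through the injectivity of $\Psi$, and applies Corollary~\ref{prop:rootqperidic}. But there is a genuine gap at the very first step: you claim the stabilizer $\langle \iota^{e}\rangle$ acts freely on $C$ because non-trivial deck transformations fix only branch points. That reasoning only works if $C$ is disjoint from the lifted branch set $\widetilde P$, and nothing in the hypotheses forces this. In the setting of the corollary ($e=1$, so $\iota(C)=C$) with $k=2$, an invariant simple closed geodesic can pass through two branch points; then $\iota|_{C}$ is an involution with two fixed points, $\pi|_{C}$ is a branched double cover of an interval, and $\pi(C)$ is a simple proper \emph{arc} joining two punctures of $D_{n}$, not a simple closed curve. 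In that situation there is no Dehn twist $T_{\pi(C)}\in B_n$ to lift, so both your lifting identity and your appeal to Corollary~\ref{prop:rootqperidic} (whose hypothesis requires positive Dehn twists about closed curves) fail; note one cannot simply replace $T_{\pi(C)}$ by the half twist $h$ about the arc, since for odd $j$ the braid $h^{j}$ is not a product of Dehn twists about disjoint closed curves.

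This exceptional case is exactly the first half of the paper's proof of Theorem~\ref{theorem:twist}: there one uses $\Psi(h)=T_{C}$ to get $b^{d}=h^{j}$, computes $Ab(b)\cdot d=j$ via the abelianization, rewrites $h^{j}=(h^{Ab(b)})^{d}$, and concludes $b\in QP(n)$ from Proposition~\ref{proposition:root} (uniqueness of roots up to conjugacy), rather than from Corollary~\ref{prop:rootqperidic}. To complete your proof you must either add this arc case with such an argument, or justify an additional hypothesis excluding curves through branch points --- which the corollary as stated does not provide. The remainder of your write-up (the annular-coordinates verification of the lifting identity, the $d=1$ versus $d\geq 2$ split, and the closure of $QP(n)$ under products) is fine, modulo the minor point that $T_{\pi(C)}$ is in general only quasi-positive, not positive, which does not affect the argument.
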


By Proposition~\ref{prop-Birman-Hilden}  it is easy to see that $\Psi(b^d)= T_C^j$ implies $\iota(C)=C$.

\begin{proof}[Proof of Theorem~\ref{theorem:twist}]

Since $C$ is simple and $\pi^{-1}(\pi(C))=C \sqcup \iota(C) \sqcup \cdots \sqcup \iota^{e-1}(C)$, the projection $\pi(C)$ is also simple. 

First, we treat an exceptional case where the projection $\pi(C)$ is a simple proper arc in the $n$-punctured disk $D_{n}:=D \setminus P$ connecting two distinct punctures. This can be realized only if $k=2$, $e=1$ and $C$ is a non-separating simple closed curve in $S$. 
Let $h \in QP(n)$ be the braid represented by a positive half twist about the arc $\pi(C)$. We have $\Psi(h) = T_{C}$. 
Thus, $\Psi(b^{d})=T_{C}^{j} = \Psi(h^{j})$.
Since $\Psi$ is injective (Proposition~\ref{prop-Birman-Hilden}) $b^{d}=h^{j} \in QP(n)$. Let $Ab : B_{n} \rightarrow \Z$ be the abelianization map defined by $Ab (\sigma_i^{\pm 1})=\pm 1$.
Since $Ab (h)=1$ we get $Ab(b) \cdot d = j$ and $\Psi(b^{d}) = \Psi(h^j)= \Psi((h^{Ab(b)})^{d})$. 
Proposition \ref{proposition:root} implies that $b \in QP(n)$.

Next, suppose that the projection $\pi(C)$ is a simple closed curve in the punctured disk $D_n$. 
Let $k' = \frac{k}{e}$. Since the map $\pi$ restricted to each connected component of $\pi^{-1}(\pi(C))$ is a $k':1$ cover we have 
\begin{equation}
\label{eqn:liftC}
\Psi((T_{\pi(C)})^{k'}) = T_C \circ T_{\iota(C)} \circ T_{\iota^2(C)} \circ\dots\circ T_{\iota^{e-1}(C)}.
\end{equation}
Hence, 
\[ \Psi(b^{d}) = (T_C \circ T_{\iota(C)} \circ T_{\iota^2(C)} \circ\dots\circ T_{\iota^{e-1}(C)})^j = \Psi( (T_{\pi(C)})^{k' j}).\]
Proposition~\ref{prop-Birman-Hilden} gives $b^{d}= (T_{\pi(C)})^{k'j}$. By Corollary~\ref{prop:rootqperidic}, $b \in QP(n)$.
\end{proof}

Next, we study a more general case.
Recall that $f \in \Homeo(S')$, $S'_{i}:=\iota^{i-1}(S') \subset S$,  and   $f_i:=\iota^{i-1} \circ \widehat{f} \circ \iota^{-i+1} \in \Homeo(S)$ for $i=1,\ldots,k$.

\begin{lemma}\label{lemma:centralizer}
Suppose that $[f]\in \Mod(S')$ is pseudo-Anosov. 
Any centralizer $[g]\in Z([f]) \subset \Mod(S')$ of $[f]$ is either pseudo-Anosov or periodic %(i.e., a product of Dehn twists along the boundary components of $S'$). 
\end{lemma}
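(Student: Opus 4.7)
The plan is to invoke the Nielsen--Thurston trichotomy applied to $[g]$, together with the naturality of the canonical reduction system under conjugation. Since $[f]\in\Mod(S')$ is pseudo-Anosov, $S'$ must be hyperbolic (not an annulus), so the classification applies: any mapping class of $S'$ is periodic, pseudo-Anosov, or reducible with nonempty canonical reduction system. If $[g]$ is periodic or pseudo-Anosov there is nothing to prove, so I may assume $[g]$ is reducible and not periodic, aiming for a contradiction.

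In that case, by Birman--Lubotzky--McCarthy \cite{BLM} the canonical reduction system $\Sigma := \sigma([g])$ is a nonempty, finite set of isotopy classes of pairwise disjoint essential simple closed curves in $S'$. The canonical reduction system is natural under conjugation, i.e.\ $\sigma([h][g][h]^{-1}) = [h](\Sigma)$. Since $[f]$ centralizes $[g]$, we have $[f][g][f]^{-1}=[g]$, so $[f](\Sigma)=\Sigma$ as a set of isotopy classes. Because $\Sigma$ is finite, some positive power $[f]^N$ fixes each individual isotopy class in $\Sigma$.

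The final step is to observe that this contradicts the pseudo-Anosov property of $[f]$: a nontrivial power of a pseudo-Anosov mapping class is again pseudo-Anosov (the invariant stable/unstable foliations and dilatation persist under taking powers), and a pseudo-Anosov mapping class fixes no isotopy class of essential simple closed curves, since any such curve would be collapsed or stretched by a nontrivial factor of the dilatation under each iteration of $[f]^N$. This contradicts the existence of a curve in $\Sigma$ fixed by $[f]^N$, completing the proof.

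The only nontrivial input is the naturality and finiteness of the canonical reduction system, which is standard. No serious obstacle is expected; the argument is essentially a direct application of the Nielsen--Thurston machinery, and the proof can be written in a few lines by citing \cite{BLM} and \cite{FLP} (or \cite{FM}).
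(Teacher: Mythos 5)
Your argument is correct, but it takes a different route from the paper. The paper works with the invariant foliations of $[f]$: since $[g]$ commutes with $[f]$, the measured foliation $[g](\mathcal F)$ is again invariant under $[f]$, hence is the stable or unstable foliation, so $[g]$ preserves the pair of foliations and is therefore (freely isotopic to) a pseudo-Anosov or periodic map. You instead analyze $[g]$ directly: if it were reducible and of infinite order, its canonical reduction system $\Sigma$ would be a nonempty finite invariant of the conjugacy class, and $[f][g][f]^{-1}=[g]$ forces $[f](\Sigma)=\Sigma$, so some power of $[f]$ would fix an essential simple closed curve class, contradicting that powers of pseudo-Anosov classes are pseudo-Anosov and fix no such class. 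Your version buys a somewhat more self-contained contradiction (the paper's final step, that a class preserving the stable/unstable pair must be pseudo-Anosov or periodic, is itself a nontrivial fact left implicit), at the cost of invoking the Birman--Lubotzky--McCarthy machinery, which the paper in any case already cites elsewhere. One point to make explicit: since mapping classes here fix $\partial S'$ pointwise, $\Mod(S')$ is torsion-free, so ``periodic'' in the lemma must be read as ``freely isotopic to a periodic map'' (equivalently, some power is a multitwist about boundary components); with that reading your trichotomy ``periodic, pseudo-Anosov, or reducible with nonempty canonical reduction system'' is the correct one (e.g.\ a boundary multitwist has empty canonical reduction system and is freely periodic), and this is exactly the sense in which the paper later uses the lemma in the proof of Claim~\ref{claim:btwist}.
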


\begin{proof}
Suppose that $[f] \in\Mod(S')$ is pseudo-Anosov and $[g]\in Z([f])$. 
Let $\mathcal F$ be the stable foliation of $[f]$. 
Since $\mathcal F$ is preserved under $[f]$ we have 
$[f][g](\mathcal F) = [g][f](\mathcal F)=[g](\mathcal F)$, which means that the foliation $[g](\mathcal F)$ is either $\mathcal F$ itself or the unstable foliation of $[f]$. In either way, the homeomorphism $g\in\Homeo(S')$ is freely isotopic to a pseudo-Anosov map or a periodic map.  
\end{proof}

Here is our main result. 
Later in Example~\ref{example-daisy} we see that it is generalizing the so called {\em daisy relation } \cite{EMV} in mapping class group theory.

\begin{theorem}\label{thm:general}
Suppose that the surfaces $S'_1,\ldots, S'_k$ are pairwise non-isotopic. 
Assume that $[f]\in Dehn^{+}(S')$ is either 
\begin{itemize}
\item a non-negative power of a single Dehn twist (i.e., $S'$ is an annulus which is neighborhood of a simple closed geodesic curve), or 
\item a pseudo-Anosov map (i.e., $S'$ is a hyperbolic surface).
\end{itemize} 
Suppose that $b \in Dehn^+(n,k)$ satisfies 
\[ \Psi(b)=[f_1 f_2 \cdots f_{k}] \]
then $b \in QP(n)$.
\end{theorem}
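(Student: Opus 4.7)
The plan is to split the argument according to the Nielsen--Thurston type of $[f]$. In both cases, the key preliminary step, and what I expect to be the main obstacle, is to upgrade the hypothesis that $S'_1,\ldots,S'_k$ are pairwise non-isotopic to the stronger conclusion that they can be isotoped to be pairwise \emph{disjoint}. The symmetry $\Psi(b)=[f_1\cdots f_k]\in\SMod(S)$ combined with $\iota f_i \iota^{-1}=f_{i+1}$ yields the identity $[f_1 f_2\cdots f_k] = [f_2 \cdots f_k f_1]$, so $[f_1]$ commutes with $[f_2\cdots f_k]$. Iterating this cyclic relation and applying Lemma~\ref{lemma:centralizer} (in the pseudo-Anosov case) or the standard rigidity of powers of a single positive Dehn twist (in the annular case), one forces the classes $[f_1],\ldots,[f_k]$ to commute pairwise, which in turn forces the supports $S'_1,\ldots,S'_k$ to be pairwise disjoint.

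Once disjointness is in hand, the annular case follows immediately from Theorem~\ref{theorem:twist}. If $[f]=T_{C_0}^N$ with $N\geq 1$ (the case $N=0$ being trivial), then disjointness of the $S'_i$ translates to disjointness of the geodesic simple closed curves $C_0,\iota(C_0),\ldots,\iota^{k-1}(C_0)$; the corresponding Dehn twists therefore commute, so
\[
\Psi(b) = T_{C_0}^N T_{\iota(C_0)}^N \cdots T_{\iota^{k-1}(C_0)}^N = \bigl(T_{C_0} T_{\iota(C_0)} \cdots T_{\iota^{k-1}(C_0)}\bigr)^N.
\]
Applying Theorem~\ref{theorem:twist} with $C=C_0$, $d=1$, $e=k$, $j=N$ then yields $b\in QP(n)$.

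In the pseudo-Anosov case, disjointness makes $\pi|_{S'}\colon S'\to\Sigma:=\pi(S')$ a homeomorphism. Define $\bar f := \pi\circ f\circ (\pi|_{S'})^{-1}\in\Homeo(\Sigma)$ and let $\bar F\in B_n=\Mod(D_n)$ be its extension by the identity on $D_n\setminus\Sigma$. A short calculation using $\pi\circ\iota=\pi$ shows $\pi\circ f_i = \bar f \circ \pi$ on each $S'_i$, so $f_1\circ f_2\circ \cdots\circ f_k$ is precisely the unique boundary-fixing lift of $\bar F$; hence $\Psi(\bar F)=\Psi(b)$, and by injectivity of $\Psi$ (Proposition~\ref{prop-Birman-Hilden}) we deduce $b=\bar F$. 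The hypothesis $[f]\in Dehn^+(S')$ transports under the homeomorphism $\pi|_{S'}$ to $[\bar f]\in Dehn^+(\Sigma)$, and any positive factorization $\bar f = T_{\alpha_1}\circ\cdots\circ T_{\alpha_m}$ on $\Sigma$ extends verbatim to a factorization of $b=\bar F$ as a product of positive Dehn twists in $B_n$ about simple closed curves in $D_n$. Since every positive Dehn twist about such a curve is a positive (hence quasi-positive) braid, we conclude $b\in QP(n)$.
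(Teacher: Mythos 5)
Your preliminary step---upgrading ``pairwise non-isotopic'' to ``pairwise disjoint''---is where the argument breaks, and it is not repairable. The cyclic symmetry gives only that $\Psi(b)=[f_1f_2\cdots f_k]$ commutes with each individual $[f_i]$; it gives no commutation relation between $[f_i]$ and $[f_j]$ themselves, and Lemma~\ref{lemma:centralizer} cannot be brought to bear on that question, since it concerns centralizers of $[f]$ \emph{inside} $\Mod(S')$, while $[f_j]$ for $j\neq 1$ is not supported in $S'$. Moreover the conclusion you want is simply false under the stated hypotheses: the paper's Example~\ref{example-daisy} is a counterexample. Take $S'$ to be an annular neighborhood of $C=i(x_1)$ and $[f]=T_C$. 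The curves $C,\iota(C),\ldots,\iota^{k-1}(C)$ pairwise intersect essentially (geometric intersection number two, as in the lantern configuration), so the supports $S'_1,\ldots,S'_k$ are pairwise non-isotopic but cannot be isotoped off each other, and the twists $T_{\iota^{i}(C)}$ do not commute pairwise; nevertheless the daisy relation shows that $[f_1\cdots f_k]=T_{A_0}^{k-2}T_{A}T_{\iota(A)}\cdots T_{\iota^{k-1}(A)}$ is a symmetric element of $Dehn^{+}(S)$, so all hypotheses of the theorem hold. In other words, the overlapping case is not a pathology to be excluded by a commutation argument; it is the main content of the theorem, which is presented precisely as a generalization of the daisy relation, and your proof covers none of it.

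The portion of your argument treating disjoint supports is essentially the paper's Case 1 and is correct (with the small correction that the projected twist about $\pi(C)$, or the extension to $D_n$ of a twist about a curve in $\Sigma$, is a full twist on the strands it encloses, hence quasipositive rather than literally a positive braid word). For the overlapping case the paper argues quite differently: from the commutation of $\Psi(b)$ with each $[f_i]$ it deduces that $\psi=f_1\cdots f_k$ preserves each $S'_i$ up to isotopy (Claim~\ref{claim:subsurface}); it then restricts a suitable power of $\psi$ to $S'_i$ and uses the essential overlap $S'_1\cap S'_p\neq\emptyset$ together with Lemma~\ref{lemma:centralizer} to show this restriction is periodic, so that some power $[\phi^{d}]$ of $\Psi(b)$ becomes a product of non-negative powers of Dehn twists about boundary components of the minimal $\iota$-invariant subsurface $\nu\supset S'_1\cup\cdots\cup S'_k$ (Claim~\ref{claim:btwist}); these boundary twists project to disjoint curves or arcs in $D_n$, exhibiting a power of $b$ as a quasipositive braid, and quasipositivity of $b$ itself is recovered from the root property, Corollary~\ref{prop:rootqperidic}. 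Some mechanism of this kind---passing to a power and then extracting a root---is needed because in the overlapping case $\Psi(b)$ itself need not visibly project to a quasipositive word; your proposal contains no substitute for it.
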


\begin{proof}%[Proof of Theorem~\ref{thm:general}]

There are two cases to consider.

\noindent
{\bf Case 1:} $S'_1,S'_2,\ldots,S'_k$ are pairwise disjoint.

\noindent
Since $[f] \in Dehn^{+}(S')$ we may write $[f_1]=T_{C_1}\cdots T_{C_l}$ for some simple closed curves $C_1,\ldots, C_l \subset S'$. 
We get $[f_{j}]=[\iota^{j-1} \widehat{f} \iota^{-j+1}]= T_{\iota^{j-1}(C_1)}\cdots T_{\iota^{j-1}(C_l)}$.
Since $S'_1,S'_2,\ldots,S'_k$ are pairwise disjoint, if $j\neq j'$ then $T_{\iota^{j}(C_{i})}$ and $T_{\iota^{j'}(C_{i'})}$ commute for every $i,i'$ and we have 
\begin{eqnarray*}
\Psi(b) & = & [f_{1}\cdots f_{k}] \\
&= &
 (T_{C_1}\cdots T_{C_l})(T_{\iota(C_1)}\cdots T_{\iota(C_{l})}) \cdots (T_{\iota^{k-1}(C_1)}\cdots T_{\iota^{k-1}(C_l)})\\
& = &(T_{C_1} T_{\iota(C_1)}\cdots T_{\iota^{k-1}(C_{1})}) \cdots (T_{C_l} T_{\iota(C_l)}\cdots T_{\iota^{k-1}(C_{l})}).
\end{eqnarray*}
Since $S'_1,S'_2,\ldots,S'_k$ are pairwise disjoint, the projection $\pi(C_i)$ is a simple closed curve in $D_n$, and we can use the second-half argument in the proof of Theorem~\ref{theorem:twist}.
By the formula (\ref{eqn:liftC}) we get $\Psi(T_{\pi(C_{i})})=T_{C_i} T_{\iota(C_i)}\cdots T_{\iota^{k-1}(C_{i})}$. 
Proposition~\ref{prop-Birman-Hilden} gives $b= T_{\pi(C_1)}\cdots T_{\pi(C_l)} \in QP(n)$.

\noindent
{\bf Case 2:} $S'_1 \cap S'_p \neq \emptyset$ for some $p\in\{2,\dots,k\}$.

\noindent
First we note that 
$[f_1 f_2 \cdots f_{k}]=\Psi(b) \in \SMod(S)$ implies that 
\[ 
[f_1f_2\cdots f_k]  = [\iota( f_1 f_2 \cdots f_k) \iota^{-1}] = [f_{2}f_3\cdots f_k f_1] = \dots = [f_k f_1\cdots f_{k-1}]. 
\]
In particular, we have
\begin{equation}
\label{eqn:center} 
\Psi(b) [f_i] = [f_i] \Psi(b) \mbox{ for every } i\in\{1,\dots,k\}. 
\end{equation}

Let $\nu$ be the minimal subsurface of $S$ such that
\begin{itemize}
\item $\nu$ contains $S'_1 \cup S'_2 \cup \cdots \cup S'_k$ and 
\item $\partial \nu$ is geodesic.
\end{itemize}
Since $\iota(S'_1 \cup S'_2 \cup \cdots \cup S'_k) = S'_1 \cup S'_2 \cup \cdots \cup S'_k$ we have $\iota(\nu)=\nu$. 
In particular, the boundary $\partial \nu$ is a multi-curve invariant under $\iota$.

For simplicity, we put $\psi:= f_1 f_2 \cdots f_k \in \Homeo(S)$. 

\begin{claim}
\label{claim:subsurface}
$\psi(S'_i)$ is isotopic to $S'_i$. 
\end{claim}

We will prove Claim~\ref{claim:subsurface} after the completion of the proof of Theorem~\ref{thm:general}. 

By Claim~\ref{claim:subsurface} 
there is a homeomorphism $\phi \in \Homeo(S)$ which is isotopic to $\psi$ and preserves $S'_i$ setwise. 
Although $\phi$ may permute components of $\partial S'_{i}$, we may assume that there exists $d_0>0$ such that $\phi^{d_0}=id$ on $\partial S'_1 \cup \partial S'_2 \cup \cdots \cup \partial S'_k$.
Let 
\begin{equation}\label{def of phi_i}
\phi_i := \phi^{d_0}|_{S'_{i}} \in \Homeo(S'_i).
\end{equation}

\begin{claim}
\label{claim:btwist}
For each $i=1,\ldots,k$ there is $d_i>0$ such that $\phi_i^{d_i}$ is isotopic to a  product of Dehn twists about the boundary components of $S'_i$. Namely, 
\begin{equation}\label{eqn}
[\phi_i^{d_i}] = \prod_{C \subset \partial S'_i} T_{C}^{N(C)} \in \Mod(S'_i) \quad \mbox{ for some } N(C) \in\Z.
\end{equation}
Moreover, the exponent $N(C)=0$ if $C$ is essential in the minimal subsurface $\nu$. 
Therefore, if $S'$ is an annulus then $[\phi_i^{d_i}] = id$.
\end{claim}

We will prove the claim after the completion of the proof of Theorem~\ref{thm:general}.

Let $d'$ be the least common multiple of $d_1,\ldots,d_k$ found in Claim~\ref{claim:btwist}. Put $d=d_{0}d'$.
As a consequence of Claim~\ref{claim:btwist} the map $\phi^{d}|_{S_{i}} = \phi_{i}^{d'} \in \Homeo(S'_i)$ is isotopic to a product of Dehn twists along common components of $\partial S'_i$ and  $ \partial \nu$. 
Let $C_{1},\ldots,C_m$ denote the boundary components of $\nu$.
We obtain 
\begin{equation}\label{phi_d}
[\phi^{d}] = [T_{C_{1}}^{N(1)}T_{C_{2}}^{N(2)}\cdots T_{C_{m}}^{N(m)}]. 
\end{equation} 
Since $[f] \in Dehn^{+}(S')$ we see that $[\phi]=[f_1\cdots f_k]$ is right-veering and $N(i)\geq 0$ for all $i=1,\dots,m$. 

We define an equivalence relation $C_{i} \sim C_{i'}$ if $\pi(C_{i})=\pi(C_{i'}) \subset D_{n}$ and let 
$\mathcal{C} = \{C_{1},\ldots,C_m\} \slash \sim$. 
For $C \in \{C_1, C_2,\dots, C_m\}$ let $[C]\in\mathcal C$ be its equivalence class and $e(C)\in\N$ the smallest positive integer such that $\iota^{e(C)}(C)=C$. 
Since $\iota(\partial \nu)=\partial\nu$ we note that $C, \iota(C), \dots, \iota^{e(C)-1}(C) \subset \partial \nu = C_1 \sqcup C_2\sqcup \cdots \sqcup  C_m$.
Put $$T_{[C]} := T_{C}T_{\iota(C)}\cdots T_{\iota^{e(C)-1}(C)}.$$ 
The fact $[\phi] \in \SMod(S)$ implies that $N(i)=N(j)$ if $C_{i} \sim C_j$. We may define non-negative integers  $N([C_i]):=N(i)$. 
The description (\ref{phi_d}) can be restated as 
\[ 
[\phi^{d}] = \prod_{[C] \in \mathcal{C}} T_{[C]}^{N([C])} \in \Mod(S). 
\]

Recall that the projection $\pi(C)$ is a simple closed curve or a simple proper arc in $D_n$ joining distinct punctures. 
This is because $\pi^{-1}\pi(C) \subset \partial \nu$ is a simple closed multi-curve. 

If $\pi(C)$ is a simple closed curve, put $b_C := (T_{\pi(C)})^{\frac{k}{e(C)}} \in QP(n)$. 
Then by (\ref{eqn:liftC}) we have $\Psi(b_C)=\Psi((T_{\pi(C)})^{\frac{k}{e(C)}}) = T_{[C]}$.

If $\pi(C)$ is a simple arc, i.e., $k=2$ and $e(C)=1$, put $b_C := h_{\pi(C)} \in QP(n)$ the positive half twist about $\pi(C)$. 
Then we have $\Psi(b_C)= \Psi(h_{\pi(C)}) = T_C = T_{[C]}$. 

For either case, we have 
\[ 
\Psi(b^{d}) = [\phi^{d}] = \Psi \left( \prod_{[C] \in \mathcal{C}} (b_C)^{N([C])} \right). 
\] 
For distinct $[C]$ and $[C']$ the projections $\pi(C)$ and $\pi(C')$ are disjoint because $C_1,\dots,C_m$ are pairwise disjoint. 
Hence, by Proposition~\ref{prop-Birman-Hilden} the braid $b^{2d}$ is a product of positive Dehn twists about pairwise disjoint simple closed curves. 
(We consider $b^{2d}$ rather than $b^d$ so that if $\pi(C)$ is an arc the half twist about $\pi(C)$ becomes  a Dehn twist about a closed curve enclosing $\pi(C)$.) 
Corollary \ref{prop:rootqperidic} shows that $b \in QP(n)$. 
\end{proof}

\begin{proof}[Proof of Claim \ref{claim:subsurface}]

Suppose that $S'$ is an annulus which is a neighborhood of a simple closed geodesic curve $C$ and $[f] = (T_C)^m$. 
By (\ref{eqn:center}) we have $(T_{\psi(C)})^m = (\psi \circ T_C \circ\psi^{-1})^m= \psi \circ (T_C)^m \circ\psi^{-1} = (T_C)^m$, and $\psi(C)$ is isotopic to $C$. 
Hence, $\psi(S'_i)$ is isotopic to $S'_i$. 

Next, suppose that $[f]\in \Mod(S')$ is pseudo-Anosov. 
The property (\ref{eqn:center}) implies that  $f_{i}\psi(S'_i)$ is isotopic to $\psi f_{i}(S'_i) = \psi(S'_i)$. Then either $\psi(S'_i)$ is isotopic to $S'_i$, or by isotopy one can make $\psi(S'_i)$ and $ S'_i$ disjoint. 
The latter possibility cannot be realized by the following reason. 
Take an essential simple closed curve $\alpha \subset S'_i$.
If we can make $\psi(S'_i)$ disjoint from $S'_i$  by isotopy, then  $\psi(\alpha)$ can also be disjoint from $S'_{i}$. 
However, by (\ref{eqn:center})
$\alpha = f_i f_i^{-1}\psi^{-1} \psi(\alpha)=f_i \psi^{-1} f_i^{-1}(\psi(\alpha)) = f_i \psi^{-1}(\psi(\alpha)) = f_{i}(\alpha)$.
This contradicts the assumption that $[f]$ is pseudo-Anosov.
\end{proof}

\begin{proof}[Proof of Claim~\ref{claim:btwist}]

In the case where $S'$ is an annulus,  (\ref{eqn}) is a direct consequence of Claim \ref{claim:subsurface}. 

Assume that $[f]\in\Mod(S')$ is pseudo-Anosov and $S'$ is not an annulus. 
By the symmetry, it is enough to prove the statement (\ref{eqn}) for the case $i=1$. 
Recall that $S'_1\neq S'_p$ and $S'_1\cap S'_p \neq \emptyset$.  
Let $D$ be a connected component of $S'_1 \cap S'_p$.
Since $\phi^{d_0}=id$ on $\partial S'_1 \cup \partial S'_2 \cup \cdots \cup \partial S'_k$ and $\partial D \subset (\partial S'_1 \cup \partial S'_2 \cup \cdots \cup \partial S'_k)$ we have $\phi^{d_{0}}(D)=D$ and $\phi^{d_0}=id$ on $\partial D$.

Since $\partial S'_1$ is geodesic $D$ cannot be a bigon or an annulus. 
%Let $\Gamma = \partial (S'_1 \setminus D)$ if $D$ is a disk, and {\color{blue} \marginpar{\tiny why we cannot put $\Gamma = \partial (S'_1 \setminus D)$ for non-disk case??} let $\Gamma = \partial D$ if $D$ is not a disk. } 
Let $\Gamma = \partial (S'_1 \setminus D)$. Then $\Gamma$ is a simple closed multi-curve in $S'_1$. 
Since $S'_1$ is not an annulus 
% {\color{red}\marginpar{\tiny For both disk and non-disk case, $\Gamma$ may contain boundary parallel curves, so modified}
$\Gamma$ contains an arc $\gamma$ which is essential in $S'_1$. 
Since $\phi^{d_0}=id$ on $\partial D$ the curve $\phi_{1}(\gamma)=\phi^{d_0}(\gamma)$ is isotopic to $\gamma$; hence,  the mapping class $[\phi_{1}] \in\Mod(S')$ cannot be pseudo-Anosov.

On the other hand, $[f]=[f_1|_{S'_1}]\in \Mod(S')$ is pseudo-Anosov, and by (\ref{eqn:center}) $[\phi_1] \in Z([f]) \subset \Mod(S')$. 
Since $[\phi_{1}]$ is not pseudo-Anosov, Lemma~ \ref{lemma:centralizer} shows that $[\phi_1]$ is periodic. 
Namely, there is $d_1 >0$ such that $[\phi_1^{d_1}]$ is a product of Dehn twists about the boundary components of $S'_1$ and we obtain (\ref{eqn}). 

Next we show the second statement of the claim. 
The surface $S'$ is either annular or hyperbolic.  
Suppose that a boundary component $C$ of $S'_1$ is an essential curve in the surface $\nu$. 
Since $C$ is not a boundary component of $\nu$ and $\nu$ is the minimal surface containing $S'_1\cup\cdots\cup S'_k$, there must exist $p \neq 1$ such that $C \cap S'_p \neq \emptyset.$ 

Note that $C$ cannot be a boundary component of $S'_p$ (that is, one side of $C$ is $S'_1$ and the other side of $C$ is $S'_p$)  because in such a case the quotient space $S \slash \iota$ cannot be a topological disk.

If $C$ is not isotopic to a boundary component of $S'_p$ then the descriptions of $[\phi_1^{d_1}]$ and $[\phi_p^{d_p}]$ in (\ref{eqn}) and the definition of $\phi_i$ in (\ref{def of phi_i}) show that $N(C)=0$.

If $C$ transversely intersects a boundary component, say $C'$, of $S'_p$
we also get $N(C)=0$, because otherwise (\ref{def of phi_i}) and (\ref{eqn}) show that $\phi(C')\not\subset S'_p$ which contradicts Claim~\ref{claim:subsurface}.
\end{proof}

We close the paper with an example which shows that Theorem~\ref{thm:general} can be viewed as a generalization of the {\em daisy relation} found in \cite{EMV}.

\begin{example}\label{example-daisy}
Let $F$ be a sphere with $k+1$ holes ($k\geq 3$) that is obtained as the $k$-fold cyclic branched covering $\pi_{F}:F \rightarrow A$ of an annulus $A$ branched at one point. 
Let $a_0,\ldots,a_{k}$ be the boundary components of $F$.
Let $\iota_F:F \rightarrow F$ be a deck transformation defined by  a $2\pi \slash k$ rotation of $F$ about the unique branch point such that $\iota_F(a_{0})=a_{0}$ and $\iota_F(a_i)=a_{i+1}$ for $i=1,\dots,k$. See the left picture of Figure~\ref{fig:daisy}. 
Let $x_i$ ($i=1,\dots,k$) be a simple closed curves on $F$ enclosing $x_0$ and $x_i$ such that $\iota_F(x_{i})= x_{i+1}$.
According to the \emph{daisy relation} as stated in \cite{EMV} we have 
\begin{equation}\label{daisy-1}
T_{x_1}\cdots T_{x_{k}} = T_{a_0}^{k-2}T_{a_1}\cdots T_{a_k} \in \Mod(F). 
\end{equation}
(The case of $k=3$ yields the famous lantern relation.)

Recall the $k$-fold cyclic branched covering $\pi:S\to D$ with $n$ branch points and the deck transformation $\iota:S\to S$. 
Take an embedding $i:F \hookrightarrow S$ such that $i\circ \iota_F(x) = \iota\circ i(x)$ for all $x \in F$.
Let $C = i(x_{1}), A_{0}=i(a_0)$, and $A = i(a_1)$. 
See the right picture in Figure \ref{fig:daisy} for the simplest case $(n,k)=(3,3)$. 
%%%%%
\begin{figure}[htbp]
\begin{center}
\includegraphics*[bb=71 553 413 734, width=130mm]{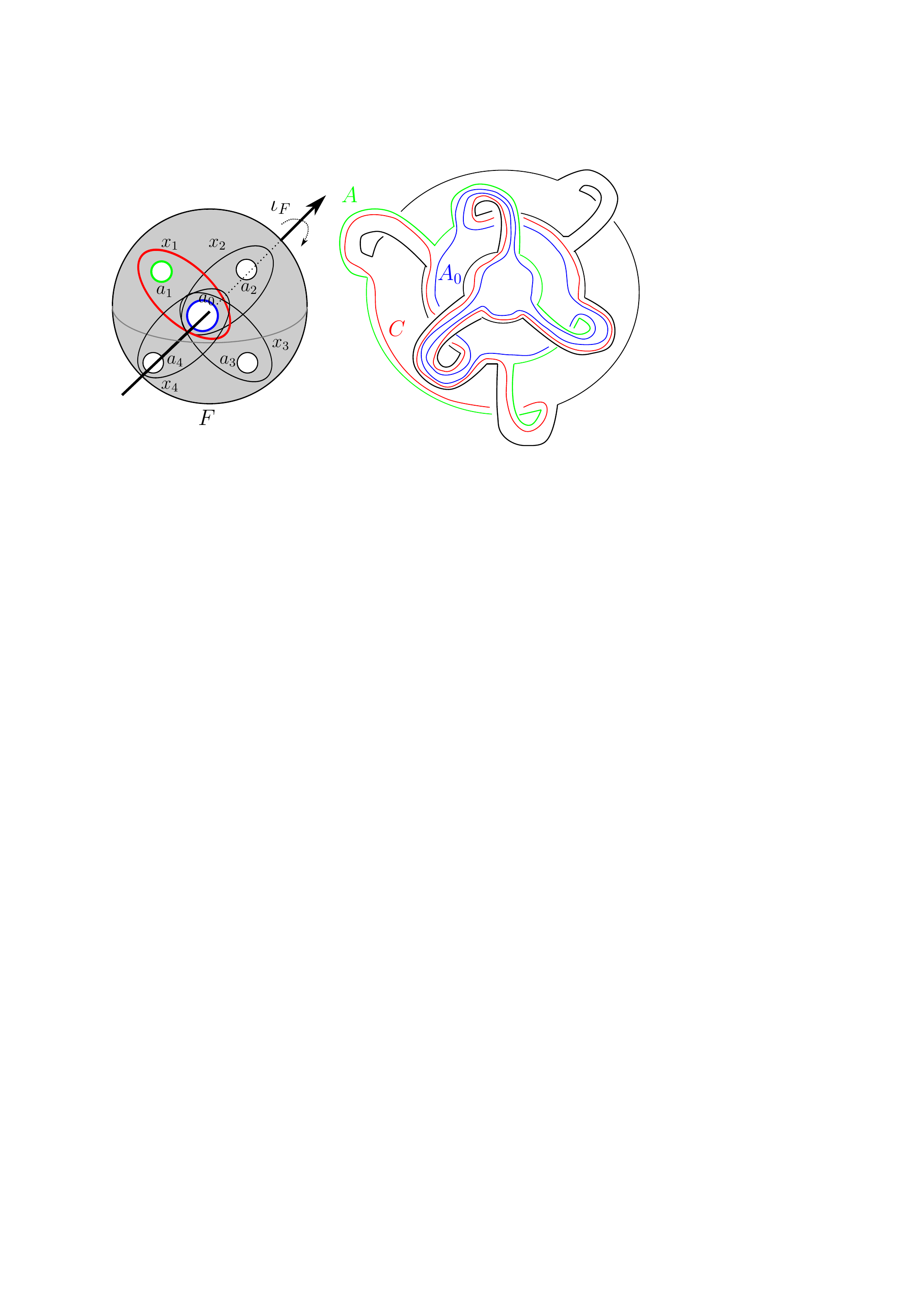}
\end{center}
\caption{Left:  The $(k+1)$ holed sphere $F$ where $k=4$. 
Right: The surface $S$ where $(n,k)=(3,3)$. 
The curves $C, A, A_{0}$ satisfy the daisy (lantern) relation $T_{C}T_{\iota(C)}T_{\iota^{2}(C)} = T_{A_0}T_{A} T_{\iota(A)}T_{\iota^{2}(A)}$.
The surface $S \setminus i(F)$ is the Bennequin surface of a $(2, 3)$ torus knot, which is $A_0$.}
\label{fig:daisy}
\end{figure}

The daisy relation (\ref{daisy-1}) gives 
\begin{equation}\label{eq:daisy}
T_{C}T_{\iota(C)}\cdots T_{\iota^{k-1}(C)} = \left( T_{A_0}^{k-2} \right) \left( T_{A} T_{\iota(A)}\cdots T_{\iota^{k-1}(A)} \right) \in \Mod(S). 
\end{equation}
The left hand side of (\ref{eq:daisy}) is of the form of $[f_1 f_2\cdots f_k]$ which is studied in Theorem~\ref{thm:general}. 
One can show, using \cite[Lemma 3.1]{HKP} and the chain relation \cite[Proposition 4.12]{FM} in mapping class group theory, the term $T_{A_0}^{k-2}$ in the right hand side of (\ref{eq:daisy}) is the image of a quasi-positive braid under the homomorphism $\Psi:B_n\to\Mod(S)$. 
We can also see that the link $A \cup \iota(A)\cup\cdots \cup \iota^{k-1}(A)$ is a $(k, k)$ torus link and the term $T_{A} T_{\iota(A)}\cdots T_{\iota^{k-1}(A)}$ is the image under $\Psi$ of a positive full twist of $k$-stranded braid, which is clearly a quasi-positive element in $B_n$. 
In this sense, Theorem~\ref{thm:general} can be seen as a generalization of the daisy relation. 
\end{example}

\section*{Acknowledgements}
The authors thank Joan Birman, John Etnyre, Amey Kaloti and Rebecca Winarski for useful discussions. 
TI was partially supported by JSPS Grant-in-Aid for Young Scientists (B) 15K17540.
KK was partially supported by NSF grant DMS-1206770 and Simons Foundation Collaboration Grants for Mathematicians 426710.

\end{document}